\definecolor{brown}{cmyk}{0, 0.72, 1, 0.45}
\definecolor{grey}{gray}{0.5}
\newtheorem{theorem}{Theorem}
\newtheorem{lemma}[theorem]{Lemma}
\newtheorem{claim}[theorem]{Claim}
\newtheorem{remark}[theorem]{Remark}
\newtheorem{corollary}[theorem]{Corollary}
\newtheorem{definition}[theorem]{Definition}
\renewcommand{\l}{\lambda}
\renewcommand{\a}{\alpha}
\newcommand{\set}[1]{\{ #1 \}}
\newcommand{\card}[1]{\left| #1 \right|}
\newcommand{\bool}{\set{0,1}}
\newcommand{\bi}{\bigskip\noindent}
\newcommand{\si}{\smallskip\noindent}
\newcommand{\la}{\lambda}
\newcommand{\e}{\varepsilon}
\newcommand{\pr}{\mathbb{P}}
\renewcommand{\Pr}[1]{\mathbb{P} \! \left( {#1} \right)}
\newcommand{\ex}{\mathbb{E}}
\newcommand{\E}[1]{\mathbb{E}\left[ {#1} \right]}
\newcommand{\var}{\text{Var}}
\newcommand{\by}{\times}
\newcommand{\Po}{\operatorname{Po}}
\newcommand{\cstar}{c^*}
\newcommand{\cstark}{c_k^*}
\newcommand{\one}{\boldsymbol 1}
\newcommand{\zero}{\boldsymbol 0}
\newcommand{\Neqref}[1]{\textbf{[missing eqn ref]}}
\newcommand{\cond}{\mid}
\newcommand{\ab}{\bar{\a}}
\newcommand{\z}{\zeta}
\newcommand{\zn}{\z_0}
\newcommand{\ii}{\boldsymbol{i}}
\newcommand{\jj}{\boldsymbol{j}}
\newcommand{\uu}{\boldsymbol{u}}
\newcommand{\vv}{\boldsymbol{v}}
\newcommand{\zzz}{\boldsymbol{\zeta}}
\newcommand{\zzzargs}{\zzz(c,k,\a)}
\newcommand{\HH}{H_k(\a,\zzz;c)}
\newcommand{\D}{\Delta}
\renewcommand{\d}{\delta}
\newcommand{\ceil}[1]{\left\lceil {#1} \right\rceil}
\newcommand{\floor}[1]{\left\lfloor {#1} \right\rfloor}
\newcommand{\fa}{g}
\newcommand{\fb}{s}
\newcommand{\medup}{0.2743}
\newcommand{\twicelo}{0.4514}
\newcommand{\llo}{2.7694}
\newcommand{\cklo}{3.3992}
\newcommand{\rr}{R}
\newcommand{\leb}{\leq O(1) \,}
\def\parenssq(#1){\left( {#1} \right)}
\newcommand{\parens}[1]{\left( {#1} \right)}
\newcommand{\ak}{\a_k}
\newcommand{\dak}{\ak/3}
\newcommand{\aak}{0.99 \ak}
\newcommand{\Var}{\operatorname{Var}}
\newcommand{\trans}{\mathsf{T}} 
\newcommand{\AAmn}{\mathcal{A}_{m,n}}
\newcommand{\BBmn}{\mathcal{B}_{m,n}}
\newcommand{\CCmn}{\mathcal{C}_{m,n}}
\newcommand{\Xmn}{X_{m,n}}
\newcommand{\Xmnl}{\Xmn^{(\ell)}}
\newcommand{\Ymn}{Y_{m,n}}
\newcommand{\Ymnl}{\Ymn^{(\ell)}}
\newcommand{\DD}{\mathcal{D}}
\newcommand{\gbsxx}[1]{\relax}
\newcommand{\MM}{\overline{M}}
\newcommand{\textfrac}[2]{{#1} \left/ {#2} \right.}
\newcommand{\mm}{{M}}
\newcommand{\nn}{{N}}
\newcommand{\mnhyp}{$m,n \to \infty$ with $\lim m/n \in (2/k,1)$}
\newcommand{\mnhypi}{$m,n \to \infty$ with $\lim m/n \in (2/k,\infty)$}
\newcommand{\mnanyhyp}{$m,n \to \infty$ with $\liminf m/n>2/k$}
\newcommand{\Pola}{X(\la)}
\newcommand{\Posla}{X_s(\la)}
\newcommand{\gftwo}{\ensuremath{\mathbb{F}_2}\xspace}
\newcommand{\Ortn}{O\Big(\frac1{\sqrt n}\Big)}
\newcommand{\alnu}{a(\ell,\nu)}
\newcommand{\bmnu}{b(m-\ell,\nu)}
\newcommand{\omegan}{w(n)}
\newcommand{\im}{i}
\title[Satisfiability Threshold for $k$-XORSAT]
{The Satisfiability Threshold for \lowercase{k}-XORSAT}
\author[Boris Pittel]{Boris Pittel}
\thanks{$^\dagger$
This research was supported by DIMACS, Center for Discrete
Mathematics and Theoretical Computer Science, Rutgers, the State University
of New Jersey, funded by the NSF under Grant No.\
DMS06-02942, Special Focus on Discrete Random Systems, and
by the NSF under Grants No.\ DMS-0805996, No.\ DMS 1101237.
The paper was begun when the second author was a researcher in
the Department of Mathematical Sciences,
IBM T.J.\ Watson Research Center, Yorktown Heights NY 10598, USA%
}
\address[Boris Pittel]{
Department of Mathematics \\
Ohio State University \\
Columbus OH 43210, USA}
\email{bgp@math.ohio-state.edu}
\author[Gregory B. Sorkin]{Gregory B. Sorkin$^\dagger$}
\address[Gregory B. Sorkin]{
Department of Management \\
London School of Economics and Political Science \\
Houghton Street \\
London WC2A 2AE
}
\email{g.b.sorkin@lse.ac.uk}
\begin{document}

\bibliographystyle{amsalpha}

\date{\today}

\begin{abstract}
We consider ``unconstrained'' random $k$-XORSAT,
which is a uniformly random
system of $m$ linear non-ho\-mo\-ge\-ne\-ous 
equations in \gftwo
over $n$ variables,
each equation containing $k\ge 3$ variables,
and also consider a ``constrained'' model where
every variable appears in at least two equations.
Dubois and Mandler proved that
$m/n=1$ is a sharp threshold for satisfiability of
constrained $3$-XORSAT,
and analyzed the 2-core of a random 3-uniform hypergraph to
extend this result to find the threshold 
for unconstrained 3-XORSAT.

We show that $m/n=1$
remains a sharp threshold for satisfiability of 
constrained $k$-XORSAT
for every $k\ge 3$,
and we use standard results on the 2-core of a random $k$-uniform hypergraph
to extend this result to find the threshold 
for unconstrained $k$-XORSAT.
For constrained $k$-XORSAT we narrow the phase transition window,
showing that 
$n-m \to \infty$ implies almost-sure satisfiability, while
$m-n \to \infty$ implies almost-sure unsatisfiability.
\end{abstract}

\maketitle

\allowdisplaybreaks

\section{Introduction}
An instance of $k$-XORSAT is given by a set of $m$
linear equations in \gftwo, over $n$ variables,
each equation involving $k$ variables 
and a right hand side which is either 0 or~1.
Equivalently, it is a linear system $Ax=b$ modulo 2
in which $A$ is an $m \by n$ 0--1 matrix
each of whose row sums is $k$,
and $b$ is an arbitrary 0--1 vector.

Random instances of many problems of this sort undergo phase transitions
around some critical ratio $\cstar$ of $m/n$,
meaning that for $m,n \to \infty$ with $\lim m/n < \cstar$,
the probability that a random instance $F_{n,m}$ is satisfiable
(or possesses some similar property) approaches $1$,
while if $\lim m/n > \cstar$
the probability approaches $0$.
(There is no loss of generality in hypothesizing the existence of a limit
since, in a broad context, a result as stated 
implies the same with the weaker hypotheses 
$\liminf m/n>\cstar$ and $\limsup m/n<\cstar$.)
Friedgut~\cite{Friedgut} proved that a wide range of problems
have such sharp thresholds,
but with the possibility that the threshold $\cstar = \cstar(n)$
does not tend to a constant.
The relatively few cases in which $\cstar$ is known to be a constant include
2-SAT, by
Chv\'atal and Reed~\cite{ChRe}, Goerdt~\cite{Goerdt},
and Fernandez de la Vega~\cite{FdlV}
(with the scaling window detailed by
Bollob\'as, Borgs, Chayes, Kim, and Wilson, \cite{BBCKW}),
an extension to Max 2-SAT, by
Coppersmith, Gamarnik, Hajiaghayi, and Sorkin~\cite{CGHS},
and the pure-literal threshold for a $k$-SAT formula, by Molloy~\cite{Molloy}.

The most natural random model of the $k$-XORSAT problem is
the ``unconstrained'' model
in which
each of the $m$ equations'
$k$ variables are drawn uniformly (without replacement)
from the set of all $n$ variables,
and the right hand side values are uniformly 0 or 1;
equivalently a random instance $Ax=b$ is given by
a matrix $A \in \bool^{m \by n}$ drawn uniformly at random
from the set of all such matrices with each row sum equal to $k$,
and $b \in \bool^m$ chosen uniformly at random.

The case $k=2$ has been extensively studied. 
As shown by Kolchin~\cite{Kolchin}
and Creignon and Daud\'e~\cite{CrDa}, 
the random instance has a solution with limiting probability $p(2m/n)+o(1)$, 
where $p(x)\in (0,1)$ for $x<1$, $p(1-)=0$, and $p(x)\equiv 0$ for $x>1$. 
Daud\'e and Ravelomanana~\cite{DaRa}, and Pittel and Yeum~\cite{PiYe},
analyzed the near-critical behavior
of the solvability probability for $2m/n=1+\varepsilon$, 
$\varepsilon =o(n^{-1/4})$.

For $k>2$, Kolchin~\cite{Kolchin}
analyzed the expected number of nonempty ``critical row sets''
(nonempty collections of rows whose sum is all-even),
whose presence is necessary and sufficient for the (Boolean) rank of $A$ 
to be less than $m$. 
He determined the thresholds $c_k$ such that the expected number
of nonempty critical sets goes to 0 if $\lim m/n<c_k$ 
and to infinity if $\lim m/n>c_k$;
in particular, $c_3=0.8894\dots$.
Thus, for $\lim m/n < c_k$, with high probability
$A$ is of full rank, so $Ax=b$ is solvable.
It follows
that the satisfiability threshold $\cstar_k$ is at least $c_k$.
It is an easy observation (see Remark~\ref{>1})
that $\cstar_k \leq 1$.
However, Kolchin could not resolve the precise value, 
or even the existence, of the satisfiability threshold.

Dubois and Mandler~\cite{DMfocs} (see also \cite{DM02})
introduced a ``constrained'' random $k$-XORSAT model,
where $b$ is still uniformly random, but
$A$ is uniformly random over the subset of matrices in which
each column sum is at least~2.
For $k=3$ (3-XORSAT) they showed that its threshold for $m/n$ is~1.
This is of interest because from the threshold for the constrained model,
they were able to derive that for the unconstrained model.
Dubois and Mandler 
suggested that their methods could be extended 
to the general constrained $k$-XORSAT, $k\ge 3$. 
However, their approach 
--- the second-moment method for the number of solutions --- 
requires solving a hard maximization problem with $\Theta(k)$ variables, 
a genuinely daunting task.

Our main result is that 1 continues to be the threshold for all $k>3$.
\begin{theorem} \label{main}
Let $Ax=b$ be a uniformly random constrained
$k$-XORSAT instance with $m$ equations and $n$ variables.
Suppose $k\ge 4$. 
If \mnhyp\ 
then $Ax=b$ is almost surely satisfiable,
with satisfiability probability $1-O(m^{-(k-2)})$,
while if 
{$m,n \to \infty$ with $\lim m/n > 1$}
then $Ax=b$ is almost surely unsatisfiable,
with satisfiability probability $O(2^{-(m-n)})$.
\end{theorem}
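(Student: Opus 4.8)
The two halves of the statement have quite different characters. The unsatisfiable regime $\lim m/n>1$ needs nothing delicate: $A$ and $b$ are independent and $b$ is uniform on $\bool^{m}$, so conditioning on $A$ the set of right-hand sides for which $Ax=b$ is solvable is exactly the column space of $A$, of size $2^{\operatorname{rank}A}$; since $A$ is $m\by n$ with $m>n$ we have $\operatorname{rank}A\le n$, and hence the satisfiability probability is $\E{2^{\operatorname{rank}A-m}}\le 2^{-(m-n)}$. (The inequality holds for any $m>n$; only the conclusion being $o(1)$ uses $m-n\to\infty$.)

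For the satisfiable regime $2/k<\lim m/n<1$ the plan is to count \emph{even covers}. Let $C$ be the number of subsets $S\subseteq[m]$ --- including $S=\emptyset$ --- with $\sum_{i\in S}A_i=\zero$ over \gftwo (the nonempty such $S$ being Kolchin's critical row sets); equivalently $C=2^{d}$ where $d=m-\operatorname{rank}A$ is the dimension of the left null space of $A$. Conditioning on $A$, the system $Ax=b$ is solvable iff $b$ is orthogonal to every even cover, an event of conditional probability $2^{-d}=1/C$; hence the satisfiability probability equals $\E{1/C}\ge 1/\E{C}$, and it suffices to prove
\[
\E{C}=1+O(m^{-(k-2)}).
\]
(Writing $Z$ for the number of solutions, $\E{Z}=2^{n-m}$ and a short computation gives $\E{Z^{2}}=(\E{Z})^{2}\,\E{C}$, so this is exactly the second-moment method of Dubois and Mandler; but only the first moment of the number of even covers is ever used.) Since $S=\emptyset$ contributes exactly $1$ to $\E{C}=\sum_{S}\Pr{\sum_{i\in S}A_i=\zero}$, the task is to show all other $S$ together contribute $O(m^{-(k-2)})$.

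I would organise that sum by $\ell=\card S$ and by the degree profile $\zzz=(\zn,\z_2,\z_4,\dots)$ of $S$, where $\z_{2j}$ is the fraction of the $n$ variables lying in exactly $2j$ edges of $S$ (degrees inside an even cover being even), subject to $\sum_j\z_{2j}=1$ and $\sum_j 2j\,\z_{2j}=\a k$ with $\a=\ell/n$. The probability that a fixed $S$ of a fixed profile is an even cover, and that the remaining $mk$ edge--slots still complete to a matrix with every column sum $\ge2$, is a direct enumeration, conveniently organised through the configuration model; summing the multinomial counts and applying Stirling gives, up to a factor polynomial in $n$,
\[
\E{C}\ =\ \poly(n)\cdot\max_{\a,\zzz}\exp\!\big(n\,\HH\big),
\]
for an explicit entropy functional $\HH$, the ``trivial'' point $\a=0,\ \zzz=(1,0,0,\dots)$ (that is, $S=\emptyset$) realising the value $0$.

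Two statements then remain, and the first is the crux: $\HH<0$ at every feasible point other than the trivial one whenever $\lim m/n<1$ (the maximum equalling $0$ at $m/n=1$). This is precisely the ``hard maximization problem with $\Theta(k)$ variables'' flagged by Dubois and Mandler, the coordinates of the (effectively truncated) profile $\zzz$ being those variables. I would first do the inner maximization over $\zzz$ for fixed $\a$: the Lagrange stationarity conditions are of product type, which pins the optimal $\zzzargs$ down to a truncated-Poisson-type law and reduces the problem to a low-dimensional one (essentially one scalar); then I would show the resulting function of $\a$ is strictly negative for all $\a>0$ exactly when $c<1$, exploiting monotonicity in $c$ and the behaviour as $\a\to 0$. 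This is where the constrained model is genuinely needed: forcing all $n$ variables to carry degree $\ge2$ is what keeps the maximum at $0$ up to $c=1$, whereas the unconstrained first moment already diverges at Kolchin's $c_k<1$, because an even cover there can simply avoid the $\Theta(n)$ variables of degree $\le1$. The second statement is a local analysis at the trivial point, which produces the error term: the dominant correction to $1$ is the $\ell=2$ contribution, i.e.\ pairs of equations with identical support --- $\binom{m}{2}$ pairs, each a coincidence of probability $\Theta(\binom{n}{k}^{-1})=\Theta(n^{-k})$, the completion to column sums $\ge2$ costing only a bounded factor --- which totals $\Theta(m^{2}n^{-k})=\Theta(m^{-(k-2)})$, while the contribution of all $\ell\ge3$ is $o(m^{-(k-2)})$, for bounded $\ell$ by direct counting and for $\ell$ growing by the bound $\HH\le0$. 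Hence $\E{C}=1+\Theta(m^{-(k-2)})$, and the satisfiability probability is $1-O(m^{-(k-2)})$. The single genuinely hard step is the $\Theta(k)$-dimensional maximization of $\HH$; the reduction to it and the enumerative estimates around the trivial point are routine by comparison.
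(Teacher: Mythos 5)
Your handling of the unsatisfiable regime and your high-level framing of the satisfiable regime --- first moment of even covers, equivalently second moment of solutions, the constrained model keeping the exponential rate nonnegative, the $\ell=2$ contribution $\Theta(m^{2}n^{-k})=\Theta(m^{-(k-2)})$ dominating the error term --- all match the paper (see Remark~\ref{>1}, Remark~\ref{equiv}, Corollary~\ref{dupCor}). But the crux, your exponential functional $\HH$, is where the proposal has a genuine gap, and it is exactly the gap the paper was written to close.

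You propose to parametrize by the full degree profile of the even cover, arrive at an entropy functional of $\Theta(k)$ variables, and then say ``I would first do the inner maximization over $\zzz$ for fixed $\a$\ldots\ and reduces the problem to a low-dimensional one (essentially one scalar); then I would show the resulting function of $\a$ is strictly negative for all $\a>0$ exactly when $c<1$.'' That is precisely the extension of Dubois--Mandler's $3$-XORSAT argument that those authors themselves flagged as plausible but did not carry out, and which the paper's introduction describes as ``a genuinely daunting task.'' Nothing in the proposal actually performs the Lagrange reduction or shows the reduced function is negative; for $k=3$ the authors note (end of Section~\ref{MainResult}) that even their own two-parameter bound fails to be negative at some points, so negativity is far from a formality. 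As it stands the hard step is identified but not resolved.

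The paper's actual route avoids that optimization entirely. Rather than sum over degree profiles and apply Stirling, it encodes the column-sum constraints via generating functions (equations \eqref{aseries}--\eqref{bseries}), uses a Chernoff/saddle-point bound with two free positive parameters $z_{1},z_{2}$ (equations \eqref{abless}--\eqref{blessbetter}), and sums over the number of occupied columns $\nu$ by the binomial theorem. The result, Lemma~\ref{expbounds}, is a bound $\ex[\Ymnl]\le O(1)\exp[n\HH]$ valid for every choice of $\zzz=(\z_1,\z_2)>0$, where $H_{k}(\a,\zzz;c)$ is the explicit four-argument function in \eqref{H=} --- a two-variable maximization \emph{independent of $k$}, not a $\Theta(k)$-variable one. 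Lemma~\ref{Hlemma} then exhibits explicit $\zzz$ making $\HH$ negative in each of several ranges of $\a$, using elementary inequalities and a small amount of interval arithmetic. So the two arguments are not the same proof re-expressed: the paper substitutes a different, tractable majorant for the object you would need to maximize, and that substitution is the missing idea in your proposal.

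Two smaller remarks. First, the paper actually works with several models: the multiset model $\CCmn$ (chips in cells, possibly repeated) is where the generating-function counting is clean, and Section~\ref{PrSpaces} (Lemma~\ref{LA}, Corollary~\ref{preq}) is needed to transfer bounds back to the true constrained model $\AAmn$; your proposal elides this but it is routine, as you say. Second, your inequality $\E{1/C}\ge 1/\E{C}$ is sound, but the paper uses the cleaner implication $\E{X}\to0\Rightarrow\pr(X=0)\to1\Rightarrow\pr(\operatorname{rank}A=m)\to1$, which also yields the stronger conclusion that \emph{whp} $N=2^{n-m}$ exactly.
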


We are also able to treat the case
when the gap between $m$ and $n$ is not linear
but arbitrarily slowly growing,
obtaining the following stronger theorem.
\begin{theorem} \label{sharp}
Let $Ax=b$ be a uniformly random constrained
$k$-XORSAT instance with $m$ equations and $n$ variables,
with $k \geq 3$ and
\mnanyhyp. 
Then, for any $\omegan \to +\infty$,
if $m \leq n-\omegan$
then $Ax=b$ is almost surely satisfiable,
with satisfiability probability $1-O(m^{-(k-2)}+\exp(-0.59 \: \omegan))$,
while if $m \geq n+\omegan$
then $Ax=b$ is almost surely unsatisfiable,
with satisfiability probability $O(2^{-\omegan})$.
\end{theorem}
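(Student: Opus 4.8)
The plan is to prove Theorem~\ref{sharp} by the same two-pronged strategy that underlies Theorem~\ref{main}, but with the linear gap $m-n$ or $n-m$ replaced everywhere by the slowly-growing $\omegan$, and with explicit attention to the dependence of all error terms on the gap. For the unsatisfiability direction ($m \ge n+\omegan$) the argument is soft: a constrained instance $Ax=b$ is satisfiable only if $b$ lies in the column space of $A$, which (for $b$ uniform and independent of $A$) happens with probability $2^{\operatorname{rank}A - m} \le 2^{n-m} \le 2^{-\omegan}$. This requires no structural information about $A$ beyond the trivial rank bound $\operatorname{rank} A \le n$, so the $O(2^{-\omegan})$ bound is immediate; the only care needed is to confirm that conditioning on the column-sum-$\ge 2$ constraint does not disturb the independence of $b$ from $A$, which it does not.

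The satisfiability direction ($m \le n - \omegan$) is the substantive half, and here I expect to reuse essentially all of the machinery developed for Theorem~\ref{main}: the first/second-moment analysis of the number of solutions, or equivalently the analysis showing that $A$ has full row rank with high probability when $m$ is comfortably below $n$. The key quantity is $\Pr{A \text{ has a nonempty critical row set}}$ --- a nonempty set of rows summing to the zero vector over \gftwo --- since in the constrained model $Ax=b$ is satisfiable for every $b$ exactly when there is no such set (more precisely, unsatisfiability forces a critical set on which the $b$-entries sum to $1$, but the dominant obstruction is the existence of any small critical set). The first-moment / switching estimates already assembled in the paper should give that the expected number of critical row sets of size $\ell$ is $O(m^{-(k-2)})$ summed over small $\ell$, contributing the $O(m^{-(k-2)})$ term, while the contribution of critical sets of size $\Theta(n)$ must be controlled by a Laplace-type evaluation of the relevant exponential sum; it is precisely this large-$\ell$ regime where the gap $n-m$ enters, and where one gets a factor like $\exp(-c\,(n-m))$ from the fact that the exponent $H_k$ (or its analogue) is maximized at the symmetric point with a strictly negative second-order term proportional to $n-m$. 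Tracking that this constant is at least $0.59$ is the numerical crux.

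The main obstacle I anticipate is exactly this quantitative control of the large-critical-set contribution as $n-m = \omegan$ ranges over the full \emph{sub-linear} regime: in Theorem~\ref{main} one could afford constants depending on $\lim m/n$, but here $m/n \to 1$ and the ``safety margin'' shrinks to zero, so every inequality that was previously slack by a constant must now be shown slack by an amount $\Theta(\omegan)$. Concretely, I would (i) write the expected number of critical sets of density $x = \ell/n$ as $\exp(n \phi(x) + \text{lower order})$ for the appropriate rate function $\phi$; (ii) show $\phi(x) \le 0$ with equality (and vanishing derivative) only as $x \to 1/2$ or at the trivial endpoints, using the structure established earlier; (iii) expand to second order near the relevant maximizer to extract the factor $\exp(-c(n-m))$ with $c \ge 0.59$, which amounts to bounding a single explicit second derivative; and (iv) bound the remaining ``bulk'' of $x$-values by an $n$-independent constant times the peak, so that the sum over $\ell$ does not cost more than a polynomial factor absorbed into the stated error. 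Steps (i)--(ii) should be a direct specialization of results already in the paper; step (iii), the extraction of the explicit constant $0.59$, is where the real work and the risk of a worse constant lie.
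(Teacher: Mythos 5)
Your overall architecture matches the paper's: handle the $m\ge n+\omegan$ side by the trivial first-moment bound $\Pr{N>0}\le\E{N}=2^{n-m}$, and for the $m\le n-\omegan$ side identify the regime $\a=\ell/m$ near $1/2$ as the one where the linear slack $1-c$ previously gave room to spare, and extract a factor $\exp(-\Theta(n(1-c)))=\exp(-\Theta(\omegan))$ from the inequality $\HH\le -0.59(1-c)-\Theta(x^2)$ with $x=1-2\a$. That decomposition is indeed the one in the paper (its inequality \eqref{criticalxxx}), and the constant $0.59$ comes from $\ln\left(4/(x_0^2+2)\right)$ with $x_0=\twicelo$, together with a sharpened bound $\rr(\l,x)\le 0.4$ that holds once $c\ge 0.99$ forces $\la\ge 3.5$; it is not literally a second-order Taylor expansion of a rate function, but it plays the same role.

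However, your step (iv) contains a genuine gap that invalidates the conclusion precisely in the regime the theorem exists to cover, namely $\omegan=o(\log n)$. Using the bound \eqref{YlesseH} as it stands, $\ex\bigl[\Ymnl\bigr]\le O(1)\exp\bigl[n\HH\bigr]$, and the relevant $\ell$-values have $x_\ell=1-2\ell/m$ with spacing $\Theta(1/n)$; so $\sum_{\ell}\exp\bigl(-\Theta(nx_\ell^2)\bigr)=\Theta(\sqrt n)$, \emph{not} $O(1)$. The peak is approached by $\Theta(\sqrt n)$ indices $\ell$, and the resulting $\sqrt n$ factor cannot be "absorbed into the stated error'' when $\omegan$ grows more slowly than $\log n$, since $\sqrt n\,\exp(-0.59\omegan)$ need not tend to $0$. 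Closing this requires strengthening \eqref{YlesseH} to $\ex\bigl[\Ymnl\bigr]\le O(1/\sqrt n)\exp\bigl[n\HH\bigr]$, which is what the paper does: it returns to the Cauchy-integral representation of $\alnu$ from \eqref{aseries} and shows the contour integral is $O(1/\sqrt\nu)$ times the saddle value via $\abs{\cosh(z_1 e^{i\vartheta})-1}\le(\cosh z_1-1)\exp(-\Omega(\vartheta^2))$, and then (this is a second subtlety you would hit) trades the awkward $1/\sqrt\nu$ against the $\binom n\nu$ factor to replace it by $1/\sqrt n$ before the binomial theorem can be applied. This is the technically new content of the theorem's proof and is absent from your plan. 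Two smaller points you should also fold in: the reduction to $\lim m/n=1$ goes by a routine subsequence argument (otherwise Theorem~\ref{main} already applies), and the case $k=3$ is excluded from Lemma~\ref{Hlemma} and needs a separate patch (the paper covers $\a\in[0.099,0.4]$ by interval arithmetic and the rest by the same $x$-near-$0$ argument with $\rr(\l,x)\le 0.495$ in place of $0.4$).
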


Rather than using the second-moment method of Dubois and Mandler,
we use the critical-set approach of Kolchin.
Remark~\ref{equiv} shows that the two methods are equivalent,
but the second leads to more tractable calculations,
specifically, to a maximization problem with a number of variables
that is fixed, independent of $k$.
In this constrained model, 
by the same reasoning given above,
Kolchin's approach will show that $\cstar_k \geq 1$.
And, by the same reasoning as for the unconstrained model
(again see Remark~\ref{>1}),
we have $\cstar_k \leq 1$.
Thus, for the constrained model (unlike the constrained one), 
the two bounds coincide, establishing the threshold.

Dubois and Mandler extended the threshold for 
the constrained 3-XORSAT model to
that for the unconstrained model
by observing that, in an unconstrained instance,
any variable appearing in just one clause (or none), 
can be deleted along with that clause (if any),
to give an equivalent instance, and this process can be repeated.
The key observation is that 
a uniformly random \emph{unconstrained} instance
reduces to
a uniformly random \emph{constrained} instance
with a predictable edge density;
the threshold for the unconstrained model is the value for which
the corresponding constrained instance has density 1. 
The same approach works for any $k$, 
and we capitalize on existing analyses of the 2-core of a 
random $k$-uniform hypergraph to 
establish the unconstrained $k$-XORSAT threshold
in Theorem~\ref{unconstrained}.

\subsection*{Other related work}
Work on the rank of random matrices over finite fields is not as extensive 
as that on real random matrices, but nonetheless a survey is beyond our scope.
In addition to the work already described, we note that
the rank of matrices with independent random 0--1 entries 
was explored over a decade ago by
Bl\"omer, Karp and Welzl \cite{BlKaWe},
and Cooper \cite{Cooper2}, among others.

Concurrently with and independently from our work, 
the k-XORSAT phase transition was also analyzed by
Dietzfelbinger, Goerdt, Mitzenmacher, Montanari, Pagh, and Rink
as part of a study of cuckoo hashing~\cite{DGMMPR10,DGMMPR09}.
Recently, Darling, Penrose, Wade and Zabell have explored a random XORSAT
model replacing the constant $k$ with a distribution,
but the satisfiability threshold has not yet been determined for
this generalization.

To translate our result for the constrained model to the unconstrained one,
we exploit results on the core of a random hypergraph.
For usual graphs, the threshold for the appearance of an $r$-core
was first obtained by 
Pittel, Spencer, and Wormald \cite{PiSpWo}.
For $k$-uniform hypergraphs, the $r$-core thresholds were obtained
roughly concurrently by
Cooper \cite{Cooper1},
Kim \cite{Kim}, and
Molloy \cite{Molloy}.
Two aspects of Cooper's treatment are noteworthy.
First, he works with a degree-sequence hypergraph model;
taking Poisson-distributed degrees reproduces the results for a simple random
hypergraph.
Also, he observes
\cite[Section 5.2]{Cooper1} that the
point at which a random $k$-uniform hypergraph's core 
has a (typical) edges-to-vertices ratio of 1
is an upper bound on the satisfiability threshold of 
unconstrained $k$-XORSAT;
proving that this is the true threshold is the 
main subject of the present paper.

\subsection*{Outline}
The remainder of the paper is organized as follows.
Section~\ref{Prback}  formalizes our introductory observations about 
the first- and second-moment methods, 
the number of solutions, and the number of critical sets.
Section~\ref{PrSpaces} shows that
for the constrained model,
instead of considering random 0--1 matrices $A$,
it is asymptotically equivalent to consider random nonnegative 
\emph{integer} matrices $A$
subject to the same constraints on row sums (equal to $k$) 
and column sums (at least $2$).
Section~\ref{MainResult},
using generating functions and Chernoff's method, obtains an exponential bound
for the expected number of critical sets of any given cardinality.
Section~\ref{ProveLemma}
uses this bound
to show that, for $\lim m/n \in (2/k, 1)$ and $k>3$, 
the expected number of nonempty critical sets is $O(m^{-(k-2)})$.
Hence, with high probability,
there is no such set,
$A$ is of full rank, and the instance is satisfiable.
We conclude that 1 
is a sharp threshold for satisfiability of $Ax=b$ in
the constrained case for all $k\ge 3$.

Section~\ref{sharper} builds on the earlier results
to treat the case $\lim m/n=1$ and prove Theorem~\ref{sharp}.
Section~\ref{Unconstrained}
derives the unconstrained $k$-XORSAT threshold from the constrained one,
using standard results on the 2-core of a random hypergraph.

\section{Proof background}\label{Prback}

Let $N$ be the number of solutions 
to the system of equations $Ax=b$.

\begin{remark} \label{>1}
For an arbitrarily distributed $A \in \bool^{m \by n}$, 
with $b$ independent and uniformly distributed over $\bool^n$,
$\ex[N] = 2^{n-m}$,
and the satisfiability threshold is at most 1.
\end{remark}

\begin{proof}
Given $A$, there are $2^m$ systems given by $(A,b)$,
and in all they have $2^n$ solutions
since any $x$ uniquely determines $b=Ax$.
So $\E{N \cond A}=2^{n-m}$, and $\E{N}=2^{n-m}$.
By the first-moment method, 
$
\pr(Ax=b \text{ is satisfiable}) = \pr(N>0) \leq \ex[N] = 2^{n-m} ,
$
which tends to 0 if $\lim m/n >1$.
\end{proof}

\begin{definition}
Given a matrix, a \emph{critical set} is 
a collection of rows whose sum is all-even 
(i.e., the sum is the 0 vector in $\gftwo$).
\end{definition}
\noindent
Note that the
collection of critical sets is sandwiched between
the minimal linearly dependent sets of rows, and 
all linearly dependent sets of rows.

Let $X$ be the number of nonempty critical row subsets of a matrix $A$.
Where the first-moment method establishes the probable absence of solutions,
their probable presence can be established in this setting either by 
the second-moment method on the number of solutions,
showing that $\E{N^2} / \E{N}^2 \to 1$,
or by the first moment method on the number of non-empty critical row
sets, showing that $\E X \to 0$.
We will use the second approach (Kolchin's).
The two approaches suggest different calculations,
but as the following remark shows, they are equivalent.

\begin{remark} \label{equiv}
Let a distribution on $A \in \bool^{m \by n}$ be given, 
and let $b$ be independent of $A$ and uniformly distributed over $\bool^n$.
Then $\ex[N^2] \big/ \ex[N]^2 = \ex[X] +1$.
\end{remark}

\begin{proof}
Consider any fixed $A$, having rank $r(A)$ 
over $\gftwo$.
By elementary linear algebra, 
for each of the $2^{r(A)}$ values of $b$ in $\{Ax \colon x\in \{0,1\}^n\}$,
$Ax=b$ has $2^{n-r(A)}$ solutions,
giving $2^{2n-2\,r(A)}$ ordered pairs of solutions in each such case.
For the remaining values of $b$ there are no solutions,
so in all there are $2^{2n-r(A)}$ ordered pairs of solutions.
Taking the expectation over $b$ uniformly distributed over its $2^m$
possibilities, 
$\ex[N^2 \cond A] = \ex[2^{2n-r(A)-m}]$,
thus
$\ex[N^2] = \ex[2^{2n-r(A)-m}]$.
Since $\E{N}=2^{n-m}$ (see Remark~\ref{>1}),
\begin{equation*}
 \ex[N^2] / \ex[N]^2
  = \ex[2^{2n-r(A)-m}] / (2^{n-m})^2
  = \ex[2^{m-r(A)}] 
  = \ex[2^{n(A^\trans)}] ,
\end{equation*}
where $n(A^\trans)$ denotes the nullity of the transpose of $A$.

On the other hand, 
a critical row set is precisely one given by an indicator vector
$y \in \bool^m$ for which $y^\trans A=0$.
For a given $A$
the number of critical sets is thus $2^{n(A^\trans)}$,
and the expected number of non-empty critical row subsets is 
$\ex[X]=\E{2^{n(A^\trans)}}-1$.
\end{proof}

In fact, if $m \leq n$ and $\ex[X] \to 0$, then
with high probability $N=2^{n-m}$
(not merely $N/2^{n-m} \to 1$ in probability
as given by the second-moment method).
This follows because $X=0$ implies $r(A)=m$, in which case
$N=2^{n-m}$ for every $b$.
Thus,
$\Pr{N = 2^{n-m}}
 \geq \Pr{X=0}
 = 1- \Pr{X>0}
 \geq 1-\E X
 \to 1.
$

\si

The work in Sections \ref{PrSpaces}--\ref{ProveLemma} 
is to count the critical row subsets.
We will
show that indeed $\ex[X]\to 0$ for the constrained random model
with $k\ge4$ and \mnhyp.

\section{Probability spaces}  \label{PrSpaces}
This section will establish Corollary~\ref{preq},
showing that the uniform distribution over constrained $k$-XORSAT matrices
$A \in \AAmn$ (see below)
is for our purposes equivalent
to a model $C \in \CCmn$ 
allowing a variable to count as appearing more than once within an equation.

Let $\AAmn$ denote the set of all
$m\times n$ matrices with 0--1 entries, such that all $m$ row
sums are $k$, and all $n$ column sums are at least 2.
For $\AAmn$ to
be nonempty it is necessary that $km\ge 2n$,
and we will assume that 
\mnhyp.

A matrix $A\in \AAmn$ may be interpreted
as an outcome of the following allocation scheme. We have an
$m\times n$
array of cells with $k$ \emph{indistinguishable} chips assigned to each of
the $m$ rows. For each row, the $k$ chips are put in $k$ distinct cells
(so there is at most one chip per cell), subject
to the constraint that each column gets at least two chips.

Let us consider an alternative model, with the same constraints but
where the chips in each row are \emph{distinguishable},
giving allocations $B \in \BBmn$.
Then each allocation in $\AAmn$ is obtained
from $(k!)^m$ allocations in $\BBmn$,
and the uniform distribution on $\AAmn$ is equivalent to that on $\BBmn$.

Let $\CCmn$ be a relaxed version of $\BBmn$, without the requirement
that each of the $mn$ cells gets at most one chip.
Let $B$ and $C$ be
distributed uniformly on $\BBmn$ and $\CCmn$, respectively.
Crucially, and obviously, $B$ is equal in distribution to $C$, conditioned on $C\in
\BBmn$.

To state a key lemma on $\card\AAmn$, $\card\BBmn$, and
$\card\CCmn$ we need some notation, much of which will recur throughout the paper.
Introduce
\begin{equation} \label{fdef}
f(x)=\sum_{j\ge 2}\frac{x^j}{j!}=e^x-1-x
\qquad \text{ and } \qquad
\psi(x) := \frac{xf'(x)}{f(x)} .
\end{equation}
Define $\psi(0)=2$ by continuity. 

\begin{remark}\label{psi''}
$\psi(x)$ is strictly increasing. 
\end{remark}
\begin{proof}
From \eqref{fdef},
$
\psi'(x) =
\textfrac{(e^{2x} +1-2 e^x -x^2 e^x) }{ (e^x -x-1)^2} .
$
For $x \neq 0$ this is equal in sign to $e^{-x}$ times its numerator, i.e., to
$e^x+e^{-x}-2-x^2 = 2 (\cosh(x)-1-\tfrac12 x^2)$.
For $x \neq 0$ this is positive, 
as is immediate from the Taylor series expansion for $\cosh$.
Thus $\psi'(x)>0$ for $x \neq 0$,
and with continuity of $\psi'$ at 0 (easily checked) this proves the lemma.
\end{proof}

\gbsxx{
Any root of
$\psi(x) = km/n$
is a stationary point of $\,n\ln f(x) - km\ln x$
(its derivative is $(n/x) (\psi(x)-km/n)$).
}
Under our assumption that $m/n>2/k$,  
the equation
$\psi(x) = km/n$
has a unique root, and it is positive.
This follows from the facts that
$\psi(x)$ is strictly increasing (see Remark~\ref{psi''}), 
 $\psi(0+)=2$,
and $\psi(x) \to \infty$ as $x \to \infty$.
Henceforth, let 
\begin{align} \label{lambda}
\la &= \la(km/n)
 := \psi^{-1}(km/n)
\end{align}
be this root. 

Introduce a \emph{truncated} Poisson random variable $Z=Z(\la)$,
$$
\pr(Z(\la)=j)=\frac{\la^j/j!}{f(\la)},\quad j\ge 2.
$$
Observe that the probability generating function (p.g.f.) of $Z(\la)$ is given by
\begin{align} \label{pgf}
\ex\bigl[z^{Z(\la)}\bigr]=\frac{f(z\la)}{f(\la)},
\end{align}
thus
\begin{align} \label{pgfE}
\ex[Z(\la)]=\left.\frac{\partial}{\partial z}\frac{f(z\la)}{f(\la)}\right|_{z=1}=
\frac{\la f'(\la)}{f(\la)}=\frac{km}{n},
\end{align}
the final equality using \eqref{lambda}, and 
\begin{multline}\label{VarZ=}
\Var[Z(\la)]=\left.\frac{\partial^2}{\partial z^2}\frac{f(z\la)}{f(\la)}+\frac{\partial}{\partial z}\frac{f(z\la)}{f(\la)}-\left[\frac{\partial}{\partial z}\frac{f(z\la)}{f(\la)}\right]^2\right|_{z=1}\\
=\frac{\la^2 f''(\la)}{f(\la)}+\frac{\la
f'(\la)}{f(\la)}-\left[\frac{\la f'(\la)}{f(\la)}\right]^2=\Theta(\la)
\end{multline}
(specifically, for $\la>0$, $\tfrac13 \la \leq \Var[Z(\la)] \leq \la$).
\si

With these preliminaries done,
we focus on asymptotics of $\card\AAmn$, $\card\BBmn$ and $\card\CCmn$.

\begin{lemma}\label{LA}
Suppose \mnhypi. 
Then, with $\la$ as in \eqref{lambda}, 
\begin{align}
\card\CCmn=&\,\frac{1+O(n^{-1})}{\sqrt{2\pi n\Var[Z(\la)]}}\,(km)!\frac{f(\la)^n}{\la^{km}},
\label{C}\\
\frac{\card\BBmn}{\card\CCmn}=&\,\exp\left(-\frac{k-1}{2}\,\frac{\la e^{\la}}
{e^{\la}-1}\right)+o(1),\label{B}
\end{align}
so that the fraction $\card\BBmn/\card\CCmn$ is bounded away from zero.
Consequently
\begin{align}
\card\AAmn
= \frac{\card\BBmn}{(k!)^m}
=&\,\frac{1+o(1)}{\sqrt{2\pi n\Var[Z(\la)]}}\,\frac{(km)!}{(k!)^m}
\,\frac{f(\la)^n}{\la^{km}}
\exp\left(-\frac{k-1}{2}\,\frac{\la e^{\la}}{e^{\la}-1}\right).\label{A}
\end{align}
\end{lemma}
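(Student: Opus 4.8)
The plan is to get \eqref{C} by a saddle-point / local-limit computation, then read off $\card\BBmn/\card\CCmn$ as the probability that a uniformly random member of $\CCmn$ has ``no repeat'', and finally assemble \eqref{A} from $\card\AAmn=\card\BBmn/(k!)^m$.

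First I would note that a member of $\CCmn$ is exactly an assignment of the $km$ distinguishable chips (chip $t$ of row $i$, $1\le i\le m$, $1\le t\le k$) to the $n$ columns with every column receiving at least two chips, so by a routine exponential-generating-function count $\card\CCmn=(km)!\,[x^{km}]f(x)^n$ with $f$ as in \eqref{fdef}. Writing $f(\la x)^n=f(\la)^n\,\ex[x^{S_n}]$, where $S_n=Z_1+\dots+Z_n$ is a sum of i.i.d.\ copies of $Z(\la)$ (by \eqref{pgf}), and extracting $[x^{km}]$, this becomes $\card\CCmn=(km)!\,\dfrac{f(\la)^n}{\la^{km}}\,\pr(S_n=km)$. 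By \eqref{pgfE} we have $\ex[S_n]=n\,\ex[Z(\la)]=km$, so we are evaluating the point probability of the integer-valued, span-$1$, non-degenerate variable $S_n$ at its mean; under \mnhypi\ the parameter $\la$ stays in a compact subset of $(0,\infty)$ and $\Var[Z(\la)]=\Theta(1)$, so the local central limit theorem (with Edgeworth remainder) applies, and since the first-order $n^{-1/2}$ term vanishes at the mean it gives $\pr(S_n=km)=(1+O(n^{-1}))\big/\sqrt{2\pi n\,\Var[Z(\la)]}$. This is \eqref{C}.

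For the ratio, $\BBmn\subseteq\CCmn$ is the set of configurations in which no row puts two of its chips into one column, so $\card\BBmn/\card\CCmn=\pr(C\text{ has no such ``collision''})$ for $C$ uniform on $\CCmn$. Let $W$ count collisions, a collision being a triple $(i,\{t,t'\},j)$ with chips $(i,t),(i,t')$ both in column $j$. For $r\ge1$ fixed \emph{pairwise-disjoint} collisions (distinct rows, distinct columns, disjoint chip-pairs) the number of $C\in\CCmn$ realising them is $(km-2r)!\,[x^{km-2r}](e^x)^r f(x)^{n-r}$, since each of the $r$ marked columns now needs only the two chips already placed in it (EGF $e^x$) while the other $n-r$ columns still need two. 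Dividing by $\card\CCmn$, estimating $[x^{km-2r}]e^{rx}f(x)^{n-r}\big/[x^{km}]f(x)^n\sim(e^{\la}\la^2/f(\la))^r$ by the same local-limit input (the relevant sums now miss their means only by $O(1)$, which is harmless), and simplifying with $\psi(\la)=km/n$ from \eqref{lambda}, one finds $\ex[(W)_r]\to\mu^r$ with $\mu=\tfrac{k-1}{2}\,\la e^{\la}/(e^{\la}-1)$; tuples of collisions that share a column, a row, or a chip contribute terms smaller by a factor tending to $0$. Hence $W$ converges in distribution to a $\operatorname{Po}(\mu)$ variable, so $\pr(W=0)\to e^{-\mu}$, which is \eqref{B}, and since $\mu$ is bounded the limit is bounded away from $0$. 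Finally $\card\AAmn=\card\BBmn/(k!)^m=\card\CCmn\cdot(e^{-\mu}+o(1))/(k!)^m$, and absorbing the $O(n^{-1})$ and $o(1)$ into a single $1+o(1)$ (legitimate because $e^{-\mu}$ is bounded away from $0$ and $\infty$) gives \eqref{A}.

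The routine parts are the two coefficient-extraction asymptotics; the real work is (i) the Edgeworth refinement needed for the $O(n^{-1})$ in \eqref{C} rather than a bare $o(1)$, and, more seriously, (ii) bounding, uniformly over $r$, the overlapping-collision contributions to $\ex[(W)_r]$ well enough to justify the Poisson limit underlying \eqref{B}; I expect (ii) to be where the real effort lies. An alternative for (ii) is to condition $C$ on its column-degree sequence --- which is distributed as $(Z_1,\dots,Z_n)$ conditioned on $\sum_j Z_j=km$ --- and then invoke the standard Poisson limit for the number of repeated edges of a bipartite configuration model, trading the coefficient estimates for concentration of $\sum_j D_j(D_j-1)$.
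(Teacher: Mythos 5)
Your proposal follows essentially the same route as the paper: the EGF identity plus the local limit theorem for $\sum Z_j(\la)$ gives \eqref{C}, and the Poisson limit for the collision count (your $W$, the paper's $\MM$) via factorial moments — splitting each $\ex[(W)_r]$ into the dominant all-distinct tuples and the negligible overlapping ones — gives \eqref{B}. One small remark: the method of moments for a Poisson limit needs only $\ex[(W)_r]\to\gamma^r$ for each \emph{fixed} $r$, not uniformity in $r$, so the difficulty you flag in item (ii) is milder than you fear and is handled exactly as you sketch.
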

\bi

\begin{corollary} \label{preq}
Under the hypotheses of Lemma~\ref{LA},
uniformly for all non-negative, matrix-dependent functions~$r$,
\begin{equation*} 
\ex[r(A)]=
\ex[r(B)]=O(\ex[r(C)]).
\end{equation*}
\end{corollary}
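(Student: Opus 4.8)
The plan is to transfer the expectation from $A$ to $B$ exactly, and then from $B$ to $C$ at the cost of a multiplicative constant that Lemma~\ref{LA} guarantees is bounded. Recall that $r$ being ``matrix-dependent'' means it factors through the map sending an allocation (of labelled or unlabelled, one-per-cell or multiple-per-cell chips) to its matrix of chip-counts; this is what makes $r(A)$, $r(B)$, $r(C)$ directly comparable, with the understanding that for $C\notin\BBmn$ the associated matrix may have entries exceeding $1$.

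First I would observe that collapsing an allocation $B\in\BBmn$ (distinguishable chips, at most one per cell) to its underlying $0$--$1$ matrix is exactly $(k!)^m$-to-one onto $\AAmn$: within each row the $k$ labelled chips may be permuted in $k!$ ways without changing which cells are occupied, and the column constraint is insensitive to this permutation. Hence the uniform distribution on $\BBmn$ pushes forward to the uniform distribution on $\AAmn$, so $\ex[r(B)]=\ex[r(A)]$ (and incidentally $\card\AAmn=\card\BBmn/(k!)^m$, as already used in \eqref{A}). Since $\BBmn\subseteq\CCmn$ and, as noted in Section~\ref{PrSpaces}, $B$ is distributed as $C$ conditioned on $C\in\BBmn$, non-negativity of $r$ gives
\begin{equation*}
\ex[r(B)]=\ex\bigl[r(C)\bigm| C\in\BBmn\bigr]=\frac{\ex\bigl[r(C)\,\one\{C\in\BBmn\}\bigr]}{\Pr{C\in\BBmn}}\leq\frac{\ex[r(C)]}{\Pr{C\in\BBmn}}.
\end{equation*}
Because $C$ is uniform on $\CCmn$ we have $\Pr{C\in\BBmn}=\card\BBmn/\card\CCmn$, and Lemma~\ref{LA} (equation \eqref{B}) states precisely that this fraction is bounded away from zero, uniformly over the admissible range of $m,n$. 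Therefore $\ex[r(B)]\leq O(1)\cdot\ex[r(C)]$ with the implied constant independent of $r$ (and of $m,n$), which is the claimed uniform bound $\ex[r(B)]=O(\ex[r(C)])$.

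There is no real obstacle here: every step is either an identity (the $(k!)^m$-to-one correspondence, the conditioning formula) or the elementary inequality obtained by dropping an indicator against a non-negative integrand. All the substantive content --- that $\card\BBmn/\card\CCmn$ does not decay --- is already packaged in Lemma~\ref{LA}, whose proof (the asymptotic enumeration of $\card\CCmn$ via a local limit theorem for the truncated-Poisson row sums, together with an inclusion–exclusion estimate of the probability that no cell is doubly occupied) is where the genuine work lies. The only point deserving a sentence of care is fixing, once and for all, the convention for evaluating $r$ on the various allocation models, which I would state at the outset.
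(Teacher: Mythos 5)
Your proof is correct and takes essentially the same approach as the paper: both rest on the observation that $B$ is distributed as $C$ conditioned on $C\in\BBmn$, together with the fact from Lemma~\ref{LA} (equation \eqref{B}) that $\Pr{C\in\BBmn}=\card\BBmn/\card\CCmn$ is bounded away from zero. The only cosmetic difference is that the paper states the conditioning bound for probabilities of events $\mathcal{H}\subseteq\BBmn$ (leaving the extension to general non-negative $r$ implicit), whereas you work directly with $\ex[r(C)\,\one\{C\in\BBmn\}]\le\ex[r(C)]$, which is marginally more self-contained.
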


\begin{proof}
The first equality is trivial.
To show the second,
for any $\mathcal{H}\subseteq \BBmn$,
\begin{align}   \label{P(B)=O(P(C))}
\pr(B\in \mathcal{H})
 & =\pr(C\in \mathcal{H} \cond C\in \BBmn)
 \notag \\ &= \frac{\pr(C\in \mathcal{H},\,C\in \BBmn)}
{\card\BBmn \,/\,\card\CCmn }
\leq\,  \frac{\card\CCmn}{\card\BBmn} \,  {\pr(C\in\mathcal{H})}
=O(1) \, \pr(C\in\mathcal{H})
\end{align}
by \eqref{B}.
\end{proof}

\begin{proof}[Proof of Lemma~\ref{LA}]
Equation \eqref{A} is immediate from \eqref{C} and \eqref{B}.
Proving \eqref{C} and \eqref{B}
will occupy the rest of this section.

\textbf{We first prove \eqref{C}.} To determine $\card\CCmn$, recall that each row
$i \in m$ is
given its own $k$, mutually distinguishable, chips.
We can get an allocation $C \in \CCmn$ by permuting all the chips
and allocating the first $j_1 \geq 2$ chips to column~1,
the next $j_2 \geq 2$ chips to column~2, \textit{etc.};
each chip goes to its predetermined row and its random column.
Up to the irrelevant permutation of chips within the first $j_1$,
the next $j_2$, \textit{etc.},
an allocation $C \in \CCmn$ is uniquely determined by such a scheme.

We adopt the notational convention that for
$h(z)=\sum_jh_jz^j$, 
$ [z^j]\,h(z) := h_j
$.
We thus have
\begin{align} 
\card\CCmn=&
 \,\sum_{j_1+\cdots+j_n=km\atop j_1,\dots,j_n\ge 2}
    \frac{(km)!}{j_1!\cdots j_n!}
 \notag \\
=&\,(km)!\,[z^{km}]\left(\sum_{j\ge 2}\frac{z^j}{j!}\right)^n
 =(km)!\,[z^{km}]f(z)^n
 \label{14.1}
 \\
=&\,(km)!\frac{f(\la)^n}{\la^{km}}\,[z^{km}]\left(\frac{f(z\la)}{f(\la)}\right)^n
 \notag \\
=&(km)!\frac{f(\la)^n}{\la^{km}}\,[z^{km}]\bigl(\ex[z^{Z(\la)}]\bigr)^n
 \quad \text{(see \eqref{pgf})}
 \notag \\
=&\,(km)!\frac{f(\la)^n}{\la^{km}}\,\pr\left(\sum_{j=1}^nZ_j(\la)=km\right),
 \label{|Cmn|}
\end{align}
where $Z_1(\la),\dots,Z_n(\la)$ are independent copies of $Z(\la)$.
Now, since
$\Var[Z(\la)]=\Theta(\la)$ 
(by \eqref{VarZ=}) and $\liminf\la>0$ (by $\la=\la(km/n)$ and the hypothesis
that $\lim m/n > 2/k$), 
we have
$\liminf\Var[Z(\la)]>0$.
So, by a local limit theorem
(Aronson, Frieze and Pittel~\cite[equation (5)]{ArFrPi}), 
$$
\pr\left(\sum_{j=1}^nZ_j(\la)=km\right)=\pr\left(\sum_{j=1}^nZ_j(\la)=n\ex[Z(\la)]\right)=
\frac{1+O(n^{-1})}{\sqrt{2\pi n\mathrm{Var}[Z(\la)]}},
$$
which proves \eqref{C}.

\textbf{We now prove \eqref{B}.} Let $C=\{c_{i,j}\}$ be distributed uniformly on $\CCmn$. Let $M$ denote
the number of cells that house $2$ or more chips, i.e.,
$
M=\bigl|\{(i,j) \colon c_{i,j}\ge 2\}\bigr|.
$
Let
$\MM$ be the number of pairs of chips hosted by the same cell, i.e.,
$$
\MM=\sum_{(i,j) \colon c_{i,j}\ge 2}\binom{c_{i,j}}{2}=\sum_{(i,j)}\binom{c_{i,j}}{2}.
$$
$\MM=M$ iff there are no cells hosting more than $2$ chips.
Clearly
$$
\frac{\card\BBmn}{\card\CCmn}=\pr(C\in\BBmn)=\pr(\MM=0).
$$
Of course, $\pr(\MM=0)=\pr(M=0)$, but, unlike $M$, $\MM$ is amenable to moment
calculations.
\si

Denoting the indicator of an event $E$ by $\one(E)$, we write
\begin{align}
\MM &=\sum_{i\in [m],\,j\in [n]}\sum_{1\le u<v\le k}\one\bigl(E(i,j;u,v)\bigr),
 \label{MM}
\end{align}
where $E(i,j;u,v)$ is the event that, of the $k$ chips owned by row $i$, at
least the two chips
$u$ and $v$ were put into cell $(i,j)$. Each of these $mn\binom{k}{2}$ event
indicators has the same expected value,
\begin{equation}\label{Exp}
\ex[\one(E(i,j;u,v))] =
(km-2)!\,\frac{[x^{km-2}]f(x)^{n-1}e^x}{\card\CCmn} .
\end{equation}
To see why \eqref{Exp} is so, compare with \eqref{14.1}
and note that once we have put two selected chips into a cell $(i,j)$
we allocate the remaining $(km-2)$ chips
amongst $n$ columns, at least two per column,
with the exception (hence the sole $e^x$ factor)
that the $j$th column receives an unconstrained
number of additional chips (as it already has two).
Arguing as for \eqref{|Cmn|},
\begin{align}\label{Arg}
[x^{km-2}]f(x)^{n-1}e^x
 &=
 \frac{f(\la)^{n-1}e^{\la}}{\la^{km-2}} \; 
  \Pr{\sum_{j=1}^{n-1}Z_j(\la)+ \Pola=km-2} ,
\end{align}
where $\Pola$ stands for an independent, 
usual (not truncated) Poisson$(\la)$ random variable. 
This last probability equals
\begin{align*}
 \sum_r \; \Big[
  \Pr{ \Po(\l)=r } \: \cdot \:
  \pr \Big( \sum_{j=1}^{n-1} Z_j(\l)= km-2-r \Big)
 \Big]
\end{align*}
By the local limit theorem for $\sum_{j=1}^{n-1} Z_j(\l)$,
for $r \leq \ln n$
the second probability in the $r$th term of the sum is again
asymptotic to $(2\pi n \Var[Z(\l)])^{-1/2}$.
Then so is the probability in \eqref{Arg},
since $\Pr{\Pola > \ln n} = O(n^{-K})$, for every $K>0$.
From this, \eqref{MM}, \eqref{Exp}, \eqref{Arg}, and \eqref{C}, 
$$
\ex[\MM] 
 =(1+o(1))\frac{mn\binom{k}{2}}{(km)_2}\,\frac{\la^2e^{\la}}{f(\la)}
$$
with the usual falling-factorial notation
$(a)_b:=a(a-1)\cdots(a-b+1)$.  
Recalling \eqref{lambda} 
and setting
\begin{equation} \label{gammaDef}
\gamma := \frac{k-1}{2}\,\frac{\la e^{\la}}{e^{\la}-1}
\end{equation}
gives
$$
\ex[\MM] 
 =\gamma+o(1).
$$

More generally, we now show that for every fixed $t\ge 1$ we have
\begin{equation}\label{ENt}
 \ex[(\MM)_t] = \gamma^t +o(1).
\end{equation}
Letting $\ii=(i_1,\dots,i_t)$, $\jj=(j_1,\dots,j_t)$,
$\uu=(u_1,\dots u_t)$, $\vv=(v_1,\dots,v_t)$, we have
$$
(\MM)_t=\sum_{\ii\in [m]^t,\,\jj\in [n]^t}\,\,\sum_{\uu<\vv}
\one\left(\bigcap_{s=1}^t E(i_t,j_t;u_t,v_t)\right).
$$
Hence
$$
\ex[(\MM)_t]=\sum_{\ii\in [m]^t,\,\jj\in [n]^t}\,\,\sum_{\uu<\vv}
\pr\left(\bigcap_{s=1}^t E(i_t,j_t;u_t,v_t)\right).
$$
We break the sum into two parts, $\Sigma_1$ and the remainder $\Sigma_2$, where
$\Sigma_1$ is the restriction to
$\ii$ and $\jj$ each having all its components distinct. In
$\Sigma_1$ the number of summands is $(m)_t(n)_t\binom{k}{2}^t$, and each
summand is
$$
(km-2t)!\,\frac{[x^{km-2t}]\,f(x)^{n-t}(e^x)^t}{\card\CCmn};
$$
see the explanation following \eqref{Exp}. Analogously to \eqref{Arg},
\begin{equation*}
 [x^{km-2t}]f(x)^{n-t}(e^x)^t
 =\frac{f(\la)^{n-t}(e^{\la})^t}{\la^{km-2t}} 
  \, \Pr{ \; \sum_{j=1}^{n-t}Z_j(\la)+\sum_{s=1}^t \Posla = km-2t},
\end{equation*}
where the $\Posla$ are $\Po(\la)$ random variables 
independent of one another and of
$Z_1(\la),\dots, Z_{n-t}(\la)$. As before, the probability is
asymptotic to $\bigl(2\pi n\Var[Z(\la)]\bigr)^{-1/2}$. So, using \eqref{C} and
recalling \eqref{gammaDef}, we have
\begin{align}
\Sigma_1\sim&\,\frac{(m)_t(n)_t\binom{k}{2}^t}{(km)_{2t}}\,\left(\frac{\la^2e^{\la}}{f(\la)}
\right)^t\notag\\
\sim&\,\left[\frac{mn\binom{k}{2}}{(km)^2}\,\frac{\la^2e^{\la}}{f(\la)}\right]^t\to\gamma^t.
\label{Sig1}
\end{align}
In the case of $\Sigma_2$, 
letting $I=\{i_1,\dots,i_t\}$, $J=\{j_1,\dots,j_t\}$, we have
$|I|+|J|\le 2t-1$. So the number of attendant pairs $(I,J)$ is at most $(m+n)^{2t-1}
=O\bigl(m^{2t-1}\bigr)$. 
The number of pairs $(\ii,\jj)$ inducing a given pair $(I,J)$ is bounded above by a constant $s(t)$. 
For every one of those $s(t)$ choices, we select pairs of chips for each of
the chosen $t$ cells; there are at most $\binom{k}{2}^t$ ways of doing so.
Lastly, we allocate the remaining $(km-2t)$ chips in such a way that every
column $j\in [n]\setminus J$ gets at least $2$ chips. 
As in the case of $\Sigma_1$, this can be done in
\begin{multline*}
(km-2t)!\,[x^{km-2t}]\,f(x)^{n-|J|}(e^x)^{|J|}\\
=
(km-2t)!\,
\frac{f(\la)^{n-|J|}(e^{\la})^{|J|}}{\la^{km-2t}}\,\,\pr\!\!\left(\sum_{j=1}^{n-|J|}Z_j(\la)+
\sum_{s=1}^{|J|}\Po_s(\la)=km-2t\right)
\end{multline*}
ways. 
Again, the probability is asymptotic to $\bigl(2\pi n\var[Z(\la)]\bigr)^{-1/2}$. So,
as $e^{\la}>f(\la)$, the sum $\Sigma_2$ is of order
\begin{equation}\label{Sig2}
m^{2t-1}\,\frac{(km-2t)!}{(km)!}\left(\frac{e^{\la}\la^2}{f(\la)}\right)^{t}=O(m^{2t-1}/m^{2t})=O(m^{-1}).
\end{equation}
Combining \eqref{Sig1} and \eqref{Sig2}, and recalling \eqref{gammaDef}, we
conclude that for each fixed $t\ge 1$,
$$
\ex[(\MM)_t] = \gamma^t+o(1).
$$
Therefore $\MM$ is asymptotic, with all its moments and in distribution, to $\Po(\gamma)$.
In particular,
$$
\pr(\MM=0) =\pr(\Po(\gamma)=0)+o(1)=e^{-\gamma}+o(1).
$$
This completes the proof of Lemma~\ref{LA}.
\end{proof}
\bi

\section{Counting critical row subsets, and the main result}
\label{MainResult}

This section will prove Theorem~\ref{main}.
Remark~\ref{>1} already dealt with the case $\lim m/n>1$.
It suffices, then, to show that with $\lim m/n \in (2/k,1)$,
the expected number of nonempty critical row sets goes to 0:
then with high probability there is no such set,
$A$ is of full rank, and the instance is satisfiable.

In the model $\CCmn$,
Lemma~\ref{expbounds} gives an upper bound on
the expected number of critical row sets of each cardinality 
$\ell \in \set{1,\ldots,m}$  
as a function of $c=m/n$, $k$, $n$, and $\ell$, 
minimized over two additional variables $\z_1$ and $\z_2$.
Lemma~\ref{Hlemma} shows that, for $c \in (2/k,1)$,
there exist values for $\z_1$ and $\z_2$ 
making this bound small,
in particular making its exponential dependence on $n$
decreasing rather than increasing.
Corollary~\ref{dupCor} uses Lemma~\ref{Hlemma} to show that
in the model $\AAmn$
the total expected number of nonempty critical row sets is of order
$O\bigl(m^{-(k-2)}\bigr)$,
proving Theorem~\ref{main}.

Lemma~\ref{Hlemma} is established by several claims deferred to 
Section~\ref{ProveLemma},
and Section~\ref{Unconstrained} extends Theorem~\ref{main} to the 
unconstrained $k$-XORSAT model (Theorem~\ref{unconstrained}).

\begin{lemma}\label{expbounds}  
Suppose $k\ge 3$ and \mnhypi,
and let $C$ be chosen uniformly at random 
from $\CCmn$.
For $\ell \in \set{1,\ldots,m}$,
let $\Ymnl$ denote the number of 
critical row sets of $C$ of cardinality~$\ell$.
Then, with $c=m/n$,
$\l=\l(ck)$ as given by \eqref{lambda},
introducing $\zzz=(\z_1,\z_2)>\zero$ and letting $\ab=1-\a$,
\begin{equation}\label{YlesseH}
\ex\bigl[\Ymnl\bigr]
  \leb \,
 \sqrt{\tfrac{1}{\z_2}}\,
 \exp\bigl[n H_k(\alpha,\zzz;c)\bigr],
\quad\forall\,\zzz>0,
\end{equation}
where
\begin{multline}\label{H=}
H_k(\alpha,\zzz;c)=c H(\alpha)
+ck\a \ln(\a/\z_1) +ck\ab \ln(\ab/\z_2)
\\* 
 +\ln\frac{f(\l(\z_2+\z_1))+f(\l(\z_2-\z_1))}{2f(\la)} ,
\end{multline}
by continuity we define $x \ln x=0$ at $x=0$,
and $H(\a)$ is the usual entropy function
$$
H(\alpha):=
-\alpha\ln\alpha-(1-\alpha)\ln({1-\alpha}) .
$$
\end{lemma}

\begin{proof}
By symmetry,
\begin{equation}\label{Ymnl}
\ex[\Ymnl]=\binom{m}{\ell}\pr(\DD_{\ell});\quad
\DD_{\ell}:=\bigcap_{j=1}^{n}\left\{\sum_{i=1}^{\ell}c_{i,j}
\text{ is even}\right\}.
\end{equation}
By symmetry again,
\begin{align}\label{nuth}
\pr(\DD_{\ell})
=&\,\sum_{\nu=1}^n\binom{n}{\nu} \pr(\DD_{\ell,\nu}), 
\end{align}
where
\begin{align}
 \DD_{\ell,\nu}:=&\,
\bigcap_{j=1}^{\nu}\left\{\sum_{i=1}^{\ell}c_{i,j}
\text{ is even, positive}\right\}\bigcap\bigcap_{j=\nu+1}^n\left\{\sum_{i=1}^{\ell}c_{i,j}=0\right\}.
\end{align}
Recalling that $\sum_{i\in [m]}c_{i,j}\ge 2$, 
we see that on the event $\DD_{\ell,\nu}$,
\begin{equation}\label{sepcon}
\sum_{i\le\ell}c_{i,j}=\left\{\begin{alignedat}2
&\text{even }>0,\quad&&j\le\nu,\\
&0,\quad&&j>\nu;\end{alignedat}\right.\qquad
\sum_{i>\ell}c_{i,j}\geq\left\{\begin{alignedat}2
&0,\quad&&j\le\nu,\\
&2,\quad&&j>\nu.\end{alignedat}\right.
\end{equation}
Thus on $\DD_{\ell,\nu}$ the column sums of the two complementary
submatrices, $\{c_{i,j}\}_{i\leq \ell,j\in [n]}$ and $\{c_{i,j}\}_{i>\ell,j\in [n]}$, are subject
to independent constraints.

Let $\CCmn(\ell,\nu)$ denote the set of all matrices $C$ with
row sums $k$ which meet the constraints \eqref{sepcon}.  
Then $\pr(\DD_{\ell,\nu})$ is given by
\begin{equation}\label{plnuless}
p(\ell,\nu):=\pr(\DD_{\ell,\nu})=\frac{|\CCmn(\ell,\nu)|}{\card\CCmn}.
\end{equation}
By the independence of constraints on column sums for the upper and the lower
submatrices of the matrices $C$ in question,
\begin{equation} \label{c=ab}
|\CCmn(\ell,\nu)|=\alnu \cdot \bmnu,
\end{equation}
where (paralleling our definition of $\CCmn$ in Section~\ref{PrSpaces})
$\alnu $ is the number of ways to assign
$k\ell$ chips among the first $\nu$ columns so that each of those columns gets a positive even number of chips,
and 
$\bmnu$ is the number of ways to assign $k(m-\ell)$ chips among
all $n$ columns so that each of the last $(n-\nu)$ columns gets at least $2$ chips.

As in \eqref{|Cmn|},
\begin{align}
\alnu 
&=
\sum_{j_1+\cdots+j_{\nu}=k\ell\atop j_s>0,\text{ even}}
\frac{(k\ell)!}{j_1!\cdots j_{\nu}!}
\notag \\ &= 
(k\ell)!\,[z^{k\ell}]\left(\sum_{j>0,\text{ even}}\frac{z^j}{j!}\right)^{\nu}
\notag \\&
=(k\ell)!\,[z^{k\ell}](\cosh z -1)^{\nu},  \label{aseries}
\end{align}
and
\begin{align}
\bmnu=&\,\sum_{j_1+\cdots+j_n=k(m-\ell)\atop
j_1,\dots,j_{\nu}\ge 0;\,\, j_{\nu+1},\dots,j_n\ge 2}
\frac{(k(m-\ell))!}{j_1!\cdots j_n!}
\notag \\ =&
(k(m-\ell))!\,[z^{k(m-\ell)}] (e^z)^{\nu} f(z)^{n-\nu}.   \label{bseries}
\end{align}
Since the coefficients of the Taylor expansion around $z=0$ of 
$e^{z\nu}f(z)^{n-\nu}$
are non-negative, we use these identities in a standard (Chernoff) way to bound
\begin{equation}\label{abless}
\begin{aligned}
\alnu \le&\,(k\ell)!\,\frac{(\cosh z_1 -1)^{\nu}}{z_1^{k\ell}},\quad
\forall\, z_1>0 .
\end{aligned}
\end{equation}
We could bound $\bmnu$ similarly, but
we need a stronger bound, namely
\begin{equation}\label{blessbetter}
\bmnu\leb (nz_2)^{-1/2}(k(m-\ell))!\,\frac{(e^{z_2})^{\nu}f(z_2)^{n-\nu}}{z_2^{k(m-\ell)}},
\quad\forall\,z_2>0.
\end{equation}
The bound \eqref{blessbetter}
follows from three components:
the Cauchy integral formula
$$
\bmnu
 =
 \frac{(k(m-\ell))!}{2\pi}\!\!
 \oint\limits_{{\mbox{\Large${z=z_2e^{i\theta} \colon 
     \atop\theta\in (-\pi,\pi]}$}}\!\!}
 \frac {(e^{z})^{\nu}f(z)^{n-\nu}} {z^{k(m-\ell) +1 }} \, dz,
$$
and (with $z=z_2 e^{i\theta}$) 
the identity
$|e^z|=e^{z_2}\exp\bigl[-z_2(1-\cos\theta)\bigr]$ 
and the less obvious inequality
\begin{align} 
 |f(z)| & \le |f(z_2)|\exp\bigl[-z_2(1-\cos\theta)/3\bigr].
 \label{PittelIneq}
\end{align} 
(See Pittel~\cite[Appendix]{Pittel86} for the inequality,
and Aronson, Frieze and Pittel~\cite[inequality (A2)]{ArFrPi} 
for how it works in combination with the Cauchy formula.)

Using 
\eqref{plnuless}, 
\eqref{c=ab}, \eqref{abless}, \eqref{blessbetter},
with $\card\CCmn$ from
\eqref{C} 
and $\Var{Z(\la)}$ from \eqref{VarZ=}, we obtain
that, $\forall\,z_1,z_2>0$,
\begin{equation}
p(\ell,\nu)\leb \sqrt{\frac{\la}{z_2}}\,\binom{km}{k\ell}^{-1}\frac{\la^{km}}{z_1^{k\ell}\,z_2^{k(m-\ell)}}\,\,
\frac{[e^{z_2}(\cosh z_1-1)]^{\nu}f(z_2)^{n-\nu}}{f(\la)^n} .
\label{peq}
\end{equation}

Now, it is immediate from
\eqref{Ymnl}, \eqref{nuth}, and \eqref{plnuless} that
\begin{align}
\ex\bigl[\Ymnl\bigr]
 = \binom m \ell \sum_{\nu=1}^n \binom n \nu p(\ell,\nu) .
 \label{EYmn1}
\end{align}
If we restrict to $z_1$ and $z_2$ depending only on $\ell$, $m$ and $n$
(not on $\nu$),
then on substituting \eqref{peq} into the above
we may simplify the sum to obtain
\begin{align}
\ex\bigl[\Ymnl\bigr]
 \leb 
 &\, \sqrt{\frac{\la}{z_2}}\, \binom{m}{\ell}\binom{km}{k\ell}^{-1} \la^{km}
  \notag\\*
 \times&\,\frac{1}{z_1^{k\ell}\,z_2^{k(m-\ell)}}
 \parens{ \frac{f(z_2)+e^{z_2}(\cosh z_1-1)}{f(\la)}}^n
 ,\quad\forall\,z_1,z_2>0 \label{Ymnlless}
.
\end{align}
Observe that
$$
f(z_2)+e^{z_2}(\cosh z_1-1)=\frac{f(z_1+z_2)+f(z_2-z_1)}{2}.
$$
Inequality \eqref{YlesseH}, and thus the lemma,
are established by substituting this
and the Stirling-based approximation 
$\binom n {pn}=O(1) \frac1{\sqrt{ {n p(1-p)}}} \exp(n H(p))$
into \eqref{Ymnlless}, recalling that
$m=cn$, 
$\alpha=\ell/m$ and $\ab=1-\a$,
substituting $z_1 = \z_1 \l$ and $z_2 = \z_2 \l$, 
and observing that $\sqrt k = O(1)$.
For $\ell=m$ the Stirling-based approximation is inapplicable
but consistency of \eqref{YlesseH} with \eqref{Ymnlless}
is easily checked.
\end{proof}

Recall the definition of $\HH$ from \eqref{H=}.
Roughly speaking,
the following lemma
establishes the existence of $\zzz$ making $\HH$ negative.
An intuitive description of the behavior of 
$\HH$ is given at the start of the next section.

\begin{lemma} \label{Hlemma}
Let
\begin{align}
\a_k &= e k^{-k/(k-2)} .   \label{ak}
\end{align}
For all 
$k \geq 4$
and $c \in (2/k,1)$,
there exist 
$\e,\zn>0$
such that
\begin{align}
 \big( \forall \a \in (0,\a_k] \, \big) \, (\exists \zzz)
  & \colon
 \HH \leq (c \a)(\tfrac k 2-1) \ln ( \a / \a_k)
  \text{ and } \z_2>\zn  \hspace*{-2cm}
  \label{lemma1.2}
\\
 \big( \forall \a \in [\dak,1] \, \big) \, (\exists \zzz)
  & \colon
  \HH \leq -\e \text{ and } \z_2>\zn  .
  \label{lemma1.1}
\end{align}
\end{lemma}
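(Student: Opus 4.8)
The plan is to analyze $\HH$ as given by \eqref{H=} by first optimizing over $\zzz=(\z_1,\z_2)$ for each fixed $\a$ and $c$, and then tracking how the resulting optimum behaves as $\a$ ranges over the two regimes $(0,\a_k]$ and $[\dak,1]$. For a fixed $\a$, the stationarity conditions $\partial_{\z_1}\HH=0$ and $\partial_{\z_2}\HH=0$ yield two equations relating $\z_1,\z_2$ to $\a$ through the functions $f$ and its derivatives evaluated at $\l(\z_2\pm\z_1)$; because $f$ has all positive Taylor coefficients, these equations should have a unique positive solution, which one then substitutes back. I would set $\z_1 = t\z_2$ for $t\in[0,1)$ and reduce to a one-parameter family, isolating the interplay between the ``clause weight'' terms $ck\a\ln(\a/\z_1)+ck\ab\ln(\ab/\z_2)$ and the ``core'' term $\ln\bigl((f(\l(\z_2+\z_1))+f(\l(\z_2-\z_1)))/2f(\la)\bigr)$. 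The combinatorial content is that a critical set of relative size $\a$ is dominated, when $\a$ is small, by sets that look like short cycles in the underlying hypergraph, which is exactly the source of the factor $(k/2-1)$ appearing in \eqref{lemma1.2}.

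For the small-$\a$ regime \eqref{lemma1.2}, I would take $\z_1,\z_2$ both proportional to $\a^{1/2}$ (or some explicit power), so that $\l(\z_2\pm\z_1)\to 0$ and one can use $f(x)\sim x^2/2$ to linearize the core term; the entropy term $cH(\a)$ contributes $-c\a\ln\a + O(\a)$, the weight terms contribute a matching $+ck\a\ln(\a/\z_1)+\cdots$, and after collecting the $\a\ln\a$ coefficients one finds exactly the coefficient $(c\a)(k/2-1)$ of $\ln(\a/\a_k)$, with $\a_k = ek^{-k/(k-2)}$ arising as the value of $\a$ at which the bracketed sub-leading constant vanishes. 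The condition $\z_2>\zn$ is then automatic once $\zn$ is chosen small enough (uniformly in $c$ on the compact-from-below interval). For the bounded-away-from-zero regime \eqref{lemma1.1}, with $\a\in[\dak,1]$ and $c$ bounded away from $1$, I would argue instead by a direct estimate: show that the unconstrained optimum over $\zzz$ of $\HH$ is a continuous function of $(\a,c)$, that it is strictly negative at $c=1$ for every $\a$ in this range (this is where the density-1 threshold enters — it is the analogue of the fact that $\ex[X]\to 0$ when $m<n$), and hence, by compactness, strictly negative by a fixed amount $-\e$ for all $c\le 1-\d$. The harder direction here is to exhibit a single explicit $\zzz$ (or a continuous selection of them) so that the bound is genuinely uniform; a natural candidate is the ``annealed'' choice where $\z_2$ is chosen so that the expected marginal matches, $\psi(\l\z_2)$-type relation, and $\z_1$ solves a balance equation.

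The main obstacle I anticipate is the uniformity over $\a$ in the first regime, specifically handling the transition near $\a = \a_k$ where the bound in \eqref{lemma1.2} degenerates (its right-hand side vanishes at $\a=\a_k$), so the $O(\a)$ error terms from the $f(x)\sim x^2/2$ expansion must be controlled carefully — a crude Taylor bound will not suffice, and one needs the next-order term of $f$ together with a monotonicity argument (likely leaning on Remark~\ref{psi''} and on convexity of $\ln f$) to absorb the errors. A secondary difficulty is ensuring that the optimal $\z_2$ does not drift to $0$ as $\a\to 0$; this is why the lemma is stated with the explicit clause $\z_2>\zn$ rather than just ``$\HH$ is small'', and verifying it may require a separate, slightly delicate lower bound on the optimizing $\z_2$. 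I would organize the write-up by stating the stationarity equations as a preliminary claim, then two further claims handling the two regimes, with the $\a\to 0$ asymptotics isolated as its own claim — this is presumably the role of the ``several claims deferred to Section~\ref{ProveLemma}'' mentioned just before the lemma statement.
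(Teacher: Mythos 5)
Your small-$\a$ ansatz has a genuine flaw. You propose taking $\z_1,\z_2\propto\a^{1/2}$ so that $\l(\z_2\pm\z_1)\to 0$ and then using $f(x)\sim x^2/2$. But as $\a\to0$ the term $ck\ab\ln(\ab/\z_2)$ of \eqref{H=} then contributes $\approx -\tfrac{ck}{2}\ln\a$, while the final term contributes $\approx\ln\bigl(\l^2(\z_1^2+\z_2^2)/(2f(\l))\bigr)=\ln\a+O(1)$; the remaining terms are $O(\a\ln\a)+O(1)$, so the leading behavior of $\HH$ is $\bigl(1-\tfrac{ck}{2}\bigr)\ln\a\to+\infty$ since $ck>2$. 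The same ansatz also directly contradicts $\z_2>\zn$ for any fixed $\zn>0$. The correct move (Claim~\ref{asmall}) is $\z_1\propto\a^{1/2}$ but $\z_2=\ab\to1$, so that $\l\z_2$ stays bounded away from $0$; the last term is then controlled via concavity of $\ln f$ (giving $f(\l\z)/f(\l)\le e^{ck(\z-1)}$) together with $\cosh x\le e^{x^2/2}$, not by a Taylor expansion of $f$ near~$0$.

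There are two further gaps. First, your claim that the optimum of $\HH$ over $\zzz$ is strictly negative at $c=1$ for every $\a\in[\dak,1]$ is false at $\a=1/2$: taking $\z_1=\z_2=1/2$ reduces $\HH$ to $(c-1)\ln 2$, which is $0$ at $c=1$, and the infimum over $\zzz$ equals $0$ there. This is exactly why $c<1$ is essential and why $\e$ must be allowed to depend on $c$; a compactness argument anchored at the value at $c=1$ cannot close. Second, you do not address $\a$ near $1$, where $\ab\to0$ and any choice of $\z_2$ tied to $\ab$ violates $\z_2>\zn$. The paper resolves this (Claim~\ref{ahalf}) by exploiting the inequality $H_k(\a,(\z_1,\z_2);c)\le H_k(\ab,(\z_2,\z_1);c)$, a consequence of $f(x)\ge f(-x)$, so that $\z_2$ inherits its lower bound from the $\a<1/2$ regimes. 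Methodologically, the paper also avoids characterizing the true optimizer of $\HH$ over $\zzz$ (your stationarity-equation plan) and instead plugs in explicit simple $\zzz(\a)$ in each range, handling the interior range by interval arithmetic; given the issues above, chasing the stationary point does not appear to simplify matters.
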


\begin{proof}
The lemma follows immediately from 
Claims~\ref{asmall}, \ref{amed}, \ref{alarge} and~\ref{ahalf},
respectively treating $\a$ in the four ranges
$(0, 0.99 \a_k]$,
$[0.99 \a_k,\medup]$,
$(\medup,1/2]$,
and
$(1/2,1)$.
A suitable function $\zzz$ is given explicitly in each case.
\end{proof}

The lemma yields the following corollary.
\begin{corollary} \label{dupCor}
Under the hypotheses of Lemma \ref{expbounds} but with $k \geq 4$,
$$
\sum_{\ell=2}^{m}\ex\bigl[\Ymnl\bigr]
=O\bigl(m^{-(k-2)}\bigr).
$$
\end{corollary}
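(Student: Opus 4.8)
The plan is to convert the per-cardinality exponential bound of Lemma~\ref{expbounds} into a summable bound over $\ell=2,\dots,m$ by plugging in the two estimates of Lemma~\ref{Hlemma}, splitting the range of $\a=\ell/m$ at a point like $\a_k/3$ where the two regimes overlap. First I would recall that $\ex[\Ymnl] \leb \sqrt{1/\z_2}\,\exp[nH_k(\a,\zzz;c)]$, and that Lemma~\ref{Hlemma} supplies, for each $\a$, a choice of $\zzz$ with $\z_2>\zn$ (so $\sqrt{1/\z_2}=O(1)$ uniformly) and with $H_k$ bounded above: by $(c\a)(\tfrac k2-1)\ln(\a/\a_k)$ for $\a\in(0,\a_k]$, and by $-\e$ for $\a\in[\a_k/3,1]$. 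Since the two intervals cover $(0,1]$, every $\ell\in\set{2,\dots,m}$ falls in at least one, and I get $\ex[\Ymnl] = O(1)\cdot\exp[nH_k]$ with $H_k$ controlled.

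Next I would handle the ``bulk'' range, say $\ell \geq \a_k m /3$, i.e.\ $\a\geq\a_k/3$. Here $H_k\leq-\e$, so each term is $O(e^{-\e n})$; there are at most $m$ such terms, and $m e^{-\e n} = m\,e^{-\e m/c}$ is exponentially small, hence certainly $O(m^{-(k-2)})$. Then the small range, $2\leq\ell<\a_k m/3$, i.e.\ $\a<\a_k/3\leq\a_k$: here I use the first bound, so $\ex[\Ymnl] = O(1)\exp[n\cdot (c\a)(\tfrac k2-1)\ln(\a/\a_k)]$. Recalling $n c\a = n\cdot(m/n)\cdot(\ell/m)=\ell$, the exponent is $\ell(\tfrac k2-1)\ln(\a/\a_k)=(\tfrac k2-1)\ell\ln(\ell/(\a_k m))$, so $\ex[\Ymnl] = O(1)\cdot (\ell/(\a_k m))^{(k/2-1)\ell}= O(1)\cdot\big(\ell/(\a_k m)\big)^{(k-2)\ell/2}$. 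For $\ell$ in this range $\ell/(\a_k m) < 1/3$, so this is at most $O(1)\,3^{-(k-2)\ell/2}\cdot(\ell/(\a_k m))^{(k-2)(\ell-2)/2}$; isolating the $\ell=2$ term gives a contribution of order $(2/(\a_k m))^{k-2} = O(m^{-(k-2)})$, which is the dominant one, and the terms with $\ell\geq 3$ are smaller by a factor that decays geometrically in $\ell$ (for instance bounding $(\ell/(\a_k m))^{(k-2)(\ell-2)/2}\leq (C/m)^{(k-2)/2}$ for $\ell\geq 3$ and summing the geometric series in $\ell$). Summing over $3\leq\ell< \a_k m/3$ therefore yields $O(m^{-(k-2)}\cdot m^{-(k-2)/2})$ or similar, which is negligible next to the $\ell=2$ term.

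The main obstacle, such as it is, is purely bookkeeping: making sure the ``$\leb$'' constants and the $O(1)$ from $\sqrt{1/\z_2}$ are genuinely uniform in $\ell$ (this is exactly what the clause $\z_2>\zn$ in Lemma~\ref{Hlemma} buys us), and checking that the $\ell=2$ term really is the largest in the small-$\ell$ sum so that the geometric-type tail does not overwhelm it --- i.e.\ that $(\ell/(\a_k m))^{(k-2)\ell/2}$ is decreasing in $\ell$ over the relevant range, which holds since $\ell/(\a_k m)$ is bounded below $1$. A minor point is the boundary value $\ell=m$ (where $\a=1$), covered by the $H_k\leq-\e$ regime and already noted in Lemma~\ref{expbounds} to be consistent with \eqref{YlesseH}. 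Assembling the two pieces gives $\sum_{\ell=2}^m\ex[\Ymnl] = O(m^{-(k-2)}) + O(m\,e^{-\e n}) = O(m^{-(k-2)})$, as claimed.
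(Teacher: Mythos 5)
Your decomposition is exactly the paper's: split at a threshold inside the overlap $[\a_k/3,\a_k]$ of the two regimes of Lemma~\ref{Hlemma}, use \eqref{lemma1.2} for small $\ell$ and \eqref{lemma1.1} for large $\ell$, and note that the clause $\z_2>\zn$ keeps the $\sqrt{1/\z_2}$ prefactor uniformly $O(1)$ (the paper additionally fixes a closed interval $I\subset(2/k,1)$ for $c$ to make $\e,\zn$ uniform in $c$ as well). The large-$\ell$ half is fine: $O(m)\cdot e^{-\e n}$ is exponentially small.

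The gap is in your treatment of the small-$\ell$ sum. After writing $\ex[\Ymnl]\leq O(1)\,(\ell/(\a_k m))^{(k-2)\ell/2}$, you display the inequality ``$\leq O(1)\,3^{-(k-2)\ell/2}\cdot(\ell/(\a_k m))^{(k-2)(\ell-2)/2}$'', which after cancellation asserts $(\ell/(\a_k m))^{k-2}\leq 3^{-(k-2)\ell/2}$ --- false for $\ell$ near the top of the range, where the left side is $\Theta(1)$ but the right side is exponentially small. Your fallback bound $(\ell/(\a_k m))^{(k-2)(\ell-2)/2}\leq (C/m)^{(k-2)/2}$ is correct, but it does not yield a geometric series: after applying it, the remaining factor is $(\ell/(\a_k m))^{k-2}$, which is not summable to $O(1)$ over $\ell\leq\a_k m/3$ --- indeed $\sum_{\ell=3}^{\a_k m/3}(\ell/(\a_k m))^{k-2}=\Theta(m)$, so this route only gives $O(m^{1-(k-2)/2})$, which for $k=4$ is $O(1)$, not $O(m^{-(k-2)})$. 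The claimed output ``$O(m^{-(k-2)}\cdot m^{-(k-2)/2})$'' does not follow from the steps you wrote.

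The conclusion is nonetheless correct, and the fix is small. One option is to show the sequence $a_\ell:=(\ell/(\a_k m))^{(k-2)\ell/2}$ is genuinely geometric over the range: $\ln a_{\ell+1}-\ln a_\ell=\tfrac{k-2}{2}\bigl[(\ell+1)\ln(\ell+1)-\ell\ln\ell-\ln(\a_k m)\bigr]\leq \tfrac{k-2}{2}\bigl[\ln(\ell+1)+1-\ln(\a_k m)\bigr]\leq\tfrac{k-2}{2}(1-\ln 3)<0$ for $\ell+1\leq\a_k m/3$, so $a_{\ell+1}/a_\ell\leq(e/3)^{(k-2)/2}<1$ and $\sum a_\ell=O(a_2)=O(m^{-(k-2)})$. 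A cruder fix also works: the tail $\sum_{\ell\geq 3}a_\ell$ is at most (number of terms)$\times a_3=O(m)\cdot O(m^{-3(k-2)/2})=O(m^{1-3(k-2)/2})=O(m^{-(k-2)})$ since $k\geq 4$. The paper instead uses convexity of $\ell\mapsto\ell\ln(\ell/\ell_k)$ (second derivative $1/\ell>0$) to bound it by the chord on $[2,\ell_k/2]$, whose slope tends to $-\ln 2$; this produces an explicit geometric factor $\exp(-0.6(\tfrac k2-1)(\ell-2))$ multiplying the $\ell=2$ value $m^{-(k-2)}$, which is arguably the cleanest version of the same idea.
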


\begin{proof}
Since $\lim m/n \in (2/k,1)$, there exists 
a closed interval $I \subset (2/k,1)$ 
such that,
for all but finitely many cases,
$c=m/n \in I$. 
Where $\e(c,k)$ and $\zn(c,k)$ satisfy the conditions of Lemma~\ref{Hlemma},
define $\e = \e(I) = \min\set{\e(c,k) \colon c \in I}$
and $\zn = \zn(I)$ likewise.
Then, for all but finitely many pairs $m,n$, 
inequalities \eqref{lemma1.2} and \eqref{lemma1.1} hold true.

Letting $\ell_k = \a_k m = \Theta(n)$,
for $\ell \leq \ell_k/2$,
recalling that $\a c n = \a m = \ell$,
\eqref{YlesseH} and \eqref{lemma1.2} give
\begin{align} \label{ymnlBound}
\ex\bigl[\Ymnl\bigr]
 =O(1) \,
 \exp\bigl[(\tfrac k2-1) \ell \ln(\ell/\ell_k)
  \bigr] ,
\end{align}
where we have incorporated $\sqrt{\textfrac1 \zn}$ in the leading $O(1)$.
By convexity of
$\ell \ln(\ell/\ell_k)$,
interpolating
for $\ell \in [2, \ell_k/2]$
from the endpoints of this interval,
\begin{align}
\ell \ln(\ell/\ell_k)
 & \leq
 2 \ln(2/\ell_k)
  + \frac{\ell-2}{\ell_k/2-2} \parens{(\ell_k/2) \ln(1/2) - 2 \ln(2/\ell_k)}
  \notag
 \\ &=
 2 \ln(2/\ell_k) + (\ell-2) (-\ln 2 + o(1)) ,
 \notag
 \\ & \leq 2 \ln (2/\ell_k) - 0.6(\ell-2)
 \notag
\end{align}
for $n$ sufficiently large, where we have used that $\ell_k = \Theta(n)$
and $0.6 < \ln 2$.
Thus, 
\begin{align*}
\ex\bigl[\Ymnl\bigr]
 & \leq
 O(1) \, \exp \big( (\tfrac k2-1) [2  \ln (2/\ell_k) -0.6 (\ell-2) ] \big)
 \\ & =
 O(1) \, m^{-(k-2)} \exp(-0.6 (\tfrac k2-1) (\ell-2)) ,
\end{align*}
where the last line 
incorporates $(2/\a_k)^{k-2}$
in the $O(1)$.
Given this upper bound that is geometrically decreasing in $\ell$,
summing gives
$$
 \sum_{\ell=2}^{\floor{(\a_k/2) m}}
  \ex\bigl[\Ymnl\bigr]
  = O \bigl(m^{-(k-2)}\bigr) .
$$

For $\ell > (\a_k/2) m$, by \eqref{lemma1.1},
$ \ex\bigl[\Ymnl\bigr]
 =O(1) \,
 \exp(-\e n),
$
giving
$$\sum_{\ell=\ceil{(\a_k/2) m}}^{m}
\ex\bigl[\Ymnl\bigr]
  = O(m) \exp(-\e n) 
  = \exp(-\Omega(n)) .
$$
Adding the two partial sums yields Corollary~\ref{dupCor}.
\end{proof}

\begin{proof}[Proof of Theorem~\ref{main}]
By the remarks at the start of this section, 
we need only consider the case $\lim m/n \in (2/k,1)$.
Under the hypotheses of Corollary~\ref{dupCor},
let $A \in \AAmn$ and $C \in \CCmn$ be uniformly random,
and let $\Xmn$ and $\Ymn$ denote the numbers of nonempty critical row sets of 
$A$ and $C$ respectively,
and $\Xmnl$ and $\Ymnl$ those of cardinality $\ell$.
$\Xmn^{(1)}=0$ since every row of $A$ has $k$ 1's.
($\Ymn^{(1)}$ is not necessarily $0$ since a row of $C$ can be 0,
for example if all the 1's in its defining configuration lie in a single cell.)
Then
$$
\ex\bigl[\Xmn\bigr]
= 0+ \sum_{\ell=2}^{m}\ex\bigl[\Xmnl\bigr]
= O(1) \sum_{\ell=2}^{m}\ex\bigl[\Ymnl\bigr]
=O\bigl(m^{-(k-2)}\bigr),
$$
the last two equalities coming from Corollary~\ref{dupCor}
and Corollary~\ref{preq}.
Then $\Pr{A \text{ is not of full rank}} \leq \E{\Xmn} = O(m^{-(k-2)})$,
so with probability $1-O(m^{-(k-2)})$,
$A$ is of full rank and any system $Ax=b$ is satisfiable.
\end{proof}

\section{\texorpdfstring%
{Analysis of the function ${\HH}$ to prove Lemma~\ref{Hlemma}}%
{Analysis of the function H to prove Lemma~\ref{Hlemma}}}   
\label{ProveLemma}
Recall the notation $\ab=1-\a$ and $\zzz=(\z_1,\z_2)$ 
as well as the definition of $\HH$ from \eqref{H=}.
In this section we 
use an explicit function $\zzz=\zzzargs$,
taking different forms in different ranges of $\a$,
to establish Claims~\ref{asmall}, \ref{amed}, \ref{alarge} and~\ref{ahalf}
and thus Lemma~\ref{Hlemma}.

For intuition about $\HH$, the case $k=4$ is indicative.
Figure~\ref{H4} shows a graph of the function value against $\a$, 
for a few choices of $c$,
with $\zzz$ given by \eqref{zsmall} for small $\a$, 
and by $\zzz=(\a,\ab)$ otherwise.
Numerical experiments suggest that the optimal choice of $\zzz$
leads to qualitatively similar results, though of course without the
kinks where we change from one functional form for $\zzz$ to another.
As shown, $\HH$ tends to 0 at $\a=0$ (treated in Claim~\ref{asmall}), 
but the dependence on $c$ here is not critical:
an analog of the claim, with different parameters,
could be obtained as long as $c$ is bounded away from 0 and infinity.
At $\a=1/2$ (treated in Claim~\ref{alarge}), 
the function tends to 0 as $c$ tends to 1,
so this is where $c < 1$ is required.
For values of 
$\a$ between 0 and 1/2 but bounded away from them,
the function value is bounded away from 0 (for $c \leq 1$),
so relatively crude means suffice to treat this case
(Claim~\ref{amed}).
Function values for $\a>1/2$ 
(treated in Claim~\ref{ahalf})
are dominated
by their symmetric counterparts at $1-\a$.

Note that Lemma~\ref{Hlemma} only considers $k>3$.
The lemma does in fact extend to $k=3$, but this case
was already treated by \cite{DMfocs},
and poses additional difficulties for us.
In particular, for both choices of $\zzz$ we consider below,
taking $k=3$, $c=0.999$ and $\a=1/3$ leads to $\HH>0$.

\begin{figure}[htbp]
\begin{centering}
\ifpdf{%
\includegraphics[bb=0.0in 0.0in 10in 7.5in,width=4.0in]{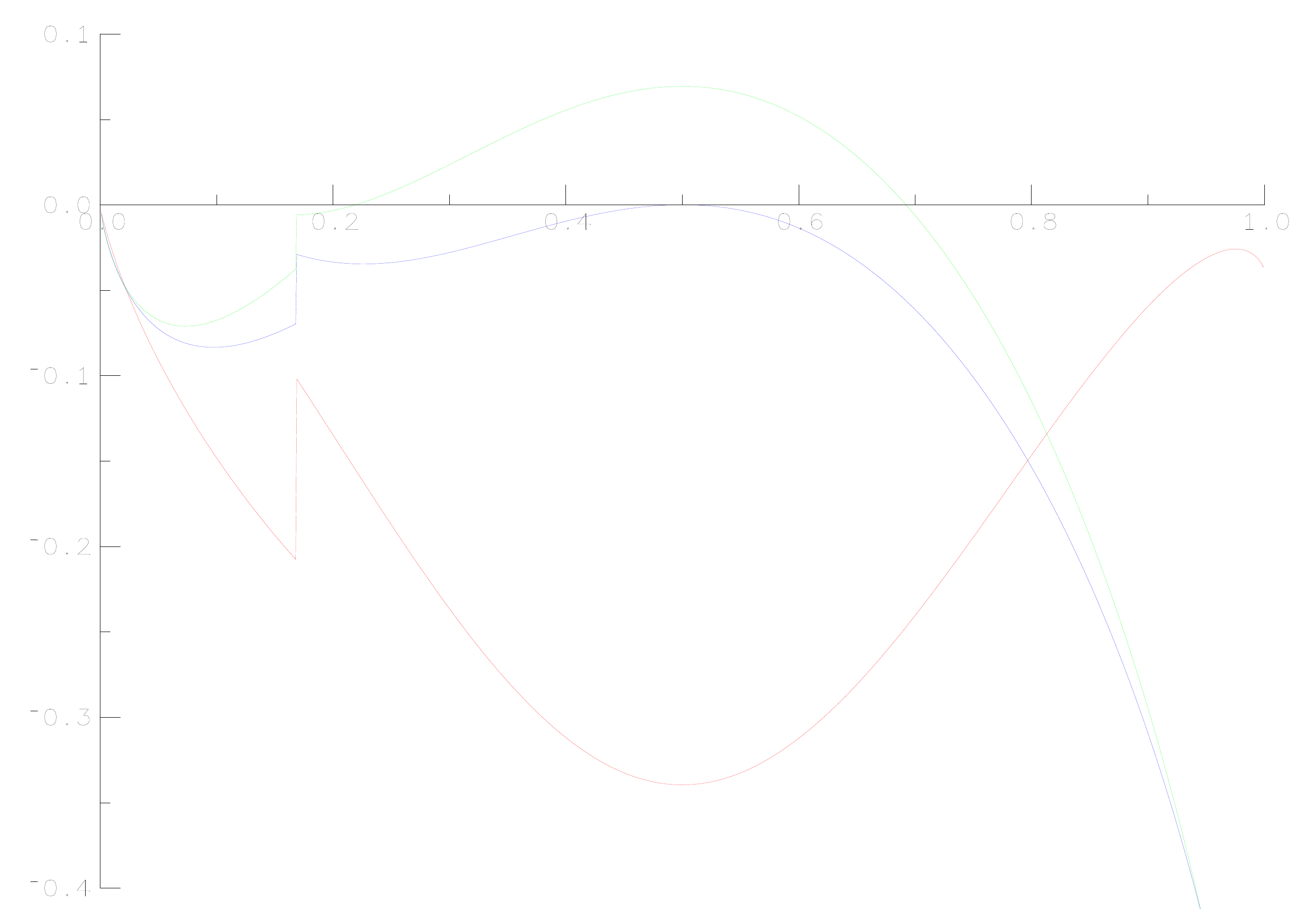}
}\else{%
\includegraphics[width=4.0in]{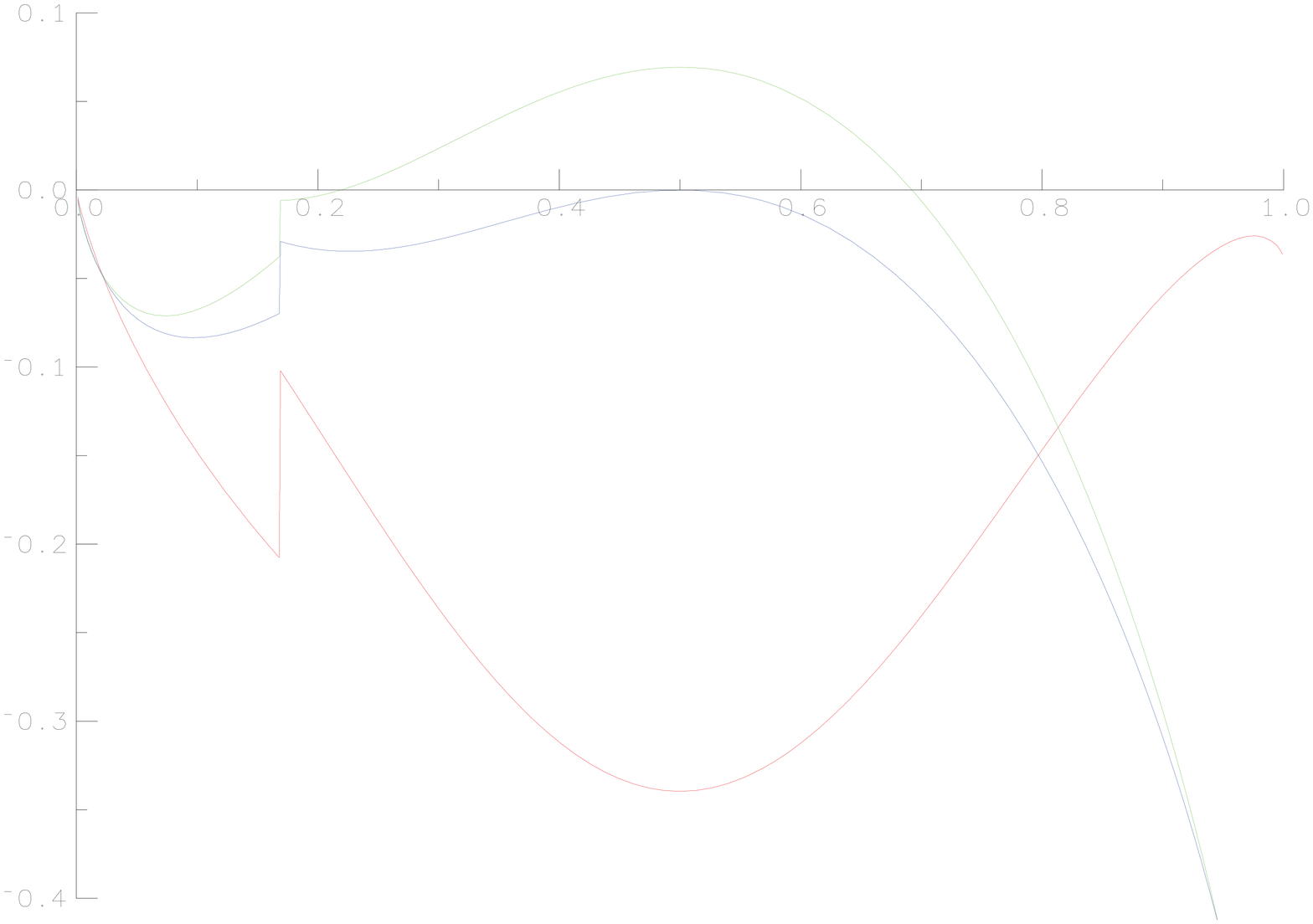}
}\fi
\end{centering}
\caption{
Plot of $\HH$ versus $\a$, for $k=4$ and $c$ values of
0.51, 1.0, 1.1 (from bottom to top).
The kinks occur at $\a_k$ where we switch functional forms for
$\zzzargs$.
}
\label{H4}
\end{figure}

\begin{claim} \label{asmall}
For all $k \geq 3$ and all $c \in (2/k, 1]$,
taking
\begin{align}
\z_1= (ck)^{-1/2} \a^{1/2}, \quad \z_2=\ab   \label{zsmall}
\end{align}
yields 
$\HH \leq (c \a)(\tfrac k 2-1) \ln ( \a / \a_k)$
for all $\a \in (0,\a_k)$.
Also, for any $\d=\d(k)>0$ 
there exists $\e=\e(k)>0$ such that
$\HH < -\e$
for all $\a \in [\d,\aak]$.
In both cases, $\z_2 \geq 1-\alpha_k>0$. 
\end{claim}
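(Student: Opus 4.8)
The plan is to substitute the explicit choice \eqref{zsmall} into the formula \eqref{H=} for $\HH$ and bound each of the four resulting terms. Write $z_1 = \z_1\l$ and $z_2 = \z_2\l = \ab\l$. The first obstacle is the fourth (logarithmic) term of \eqref{H=}, namely $\ln\bigl[(f(\l(\z_2+\z_1)) + f(\l(\z_2-\z_1)))/(2f(\la))\bigr]$; everything else is routine. My approach to that term is to observe that, since $\z_1$ is of order $\sqrt\a$ while $\z_2 = \ab = 1 - \a$, for small $\a$ we have $\z_1 \ll \z_2$ and $\z_2$ close to $1$, so $\l(\z_2\pm\z_1)$ stays in a bounded region around $\l$. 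I would Taylor-expand: writing $g(t) = f(\l\z_2 + t)$, the average $\tfrac12(g(\l\z_1) + g(-\l\z_1))$ equals $g(0) + \tfrac12 (\l\z_1)^2 g''(\xi)$ for some $\xi$, i.e.\ $f(\l\z_2) + O(\z_1^2)$ with an explicit constant coming from a bound on $f''$ on the relevant interval. Since $\z_1^2 = \a/(ck)$, this gives the fourth term as $\ln(f(\ab\l)/f(\la)) + O(\a/(ck))$, and I will need to be careful that the $O(\cdot)$ is genuinely an upper bound (the convexity of $f$, equivalently $f''>0$, makes the average at least $f(\l\z_2)$, which goes the wrong way, so I actually want $\cosh$-type monotonicity: in fact $f(\l(\z_2+\z_1)) + f(\l(\z_2-\z_1)) = 2 f(\l\z_2)\cosh(\l\z_1) + $ lower-order, or more cleanly use the identity $f(a+b)+f(a-b) = 2e^a(\cosh b) - 2a - 2 = 2(\cosh b)(f(a)+a) - 2a - 2$ noted just before \eqref{Ymnlless}, which isolates the $\a$-dependence exactly).

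Next I handle the three "entropy-type" terms. The term $cH(\a) = -c\a\ln\a - c\ab\ln\ab$ contributes a $-c\a\ln\a$ that grows like $\a\ln(1/\a)$, i.e.\ is small and positive for small $\a$. The term $ck\a\ln(\a/\z_1) = ck\a\ln(\a / ((ck)^{-1/2}\a^{1/2})) = ck\a \cdot \tfrac12\ln(ck\,\a) = \tfrac12 ck\,\a\ln(ck\a)$, again $O(\a\ln(1/\a))$. The term $ck\ab\ln(\ab/\z_2) = ck\ab\ln(\ab/\ab) = 0$ by the choice $\z_2 = \ab$ — this is the point of that choice. So the first three terms sum to $-c\a\ln\a - c\ab\ln\ab + \tfrac12 ck\a\ln(ck\a)$. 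Expanding $-c\ab\ln\ab = -c(1-\a)\ln(1-\a) = c\a + O(\a^2)$ and combining with the fourth term's main part $\ln(f(\ab\l)/f(\la))$ (which I expand in $\a$ as well, getting $-\a \cdot \l f'(\l)/f(\l) + O(\a^2) = -\a\,\psi(\l) + O(\a^2) = -ck\,\a + O(\a^2)$ using $\psi(\l) = km/n = ck$ from \eqref{lambda}), the $O(\a)$ pieces $c\a - ck\,\a$ partially cancel and the dominant surviving behavior as $\a\to 0$ is $\tfrac12 ck\,\a\ln(ck\a) - c\a\ln\a = c\a\ln\a \cdot(\tfrac k2 - 1) + (\text{lower order})$, which is exactly $(c\a)(\tfrac k2 - 1)\ln\a$. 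Matching this against the target $(c\a)(\tfrac k2-1)\ln(\a/\a_k)$, the definition $\a_k = e\,k^{-k/(k-2)}$ in \eqref{ak} is precisely calibrated so that the residual $O(\a)$ terms are absorbed; I expect the verification to reduce to checking a single inequality of the form "leading constant $\le \tfrac k2 - 1$ times $\ln(1/\a_k) = \tfrac{k}{k-2}\ln k - 1$", which is an algebraic identity in $k$.

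For the second assertion — that $\HH < -\e(k)$ uniformly on $[\d, \aak]$ — I would argue by compactness. On this closed interval, with the explicit $\zzz$ from \eqref{zsmall}, $\HH$ is a continuous function of $(\a, c)$ on the compact set $[\d,\aak]\times[2/k,1]$, so it suffices to show it is strictly negative pointwise and then take $\e$ to be the (attained) maximum. Strict negativity pointwise follows from the first part: for $\a \in [\d, \aak] \subset (0,\a_k)$ we already have $\HH \le (c\a)(\tfrac k2-1)\ln(\a/\a_k) < 0$ since $\a < \a_k$, $\tfrac k2 - 1 > 0$, and $c\a > 0$; this is bounded away from $0$ on the compact set because $\a \le 0.99\a_k$ forces $\ln(\a/\a_k) \le \ln 0.99 < 0$ uniformly. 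Finally, the claim $\z_2 = \ab = 1 - \a \ge 1 - \a_k > 0$ holds on all of $(0,\a_k]$ since $\a_k < 1$ for every $k \ge 3$ (as $k^{-k/(k-2)} < k^{-1} < 1/e$ for $k\ge 3$, giving $\a_k = e\,k^{-k/(k-2)} < 1$); I would include this one-line check. The main obstacle throughout is keeping the expansion of the logarithmic term \eqref{H=} rigorous and one-sided rather than merely asymptotic — using the exact identity for $f(a+b)+f(a-b)$ rather than a Taylor remainder is what makes this clean.
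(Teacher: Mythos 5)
Your high-level plan is right — substitute \eqref{zsmall}, notice that the choice $\z_2=\ab$ kills the third term, track the dominant $(c\a)(\tfrac k2-1)\ln\a$ coefficient, and check that $\a_k$ is calibrated to absorb the linear-in-$\a$ residue — and your entropy-term bookkeeping matches the paper's. But the central technical difficulty you correctly flag, namely getting a genuine one-sided upper bound on the fourth term
$\ln\bigl[(f(\l(\z_2+\z_1))+f(\l(\z_2-\z_1)))/(2f(\la))\bigr]$,
is not actually resolved by the route you sketch, and this is where the gap lies.

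First, the identity you invoke is wrong: since $e^a=f(a)+a+1$, not $f(a)+a$, the correct form is $f(a+b)+f(a-b)=2(\cosh b)\bigl(f(a)+a+1\bigr)-2a-2$, not $2(\cosh b)(f(a)+a)-2a-2$. More seriously, even with the correct identity you cannot cleanly factor out $\cosh$ relative to $f(\l)$: writing $\tfrac12\bigl[f(z_2+z_1)+f(z_2-z_1)\bigr]=f(z_2)+e^{z_2}(\cosh z_1-1)$, the second summand is additive, not a multiplicative $\cosh$ factor of $f(z_2)$, so your plan to ``isolate the $\a$-dependence exactly'' does not produce the needed bound. The paper's route is different and is the one that actually closes this: it first proves that $\ln f$ is concave on $(0,\infty)$ (via $[\ln f]''<0$), from which it derives the one-sided estimate \eqref{fratioexpbound}, namely $f(\l\z)/f(\l)\le\exp\bigl((\z-1)ck\bigr)$ for $\z\ge0$, using $\l f'(\l)/f(\l)=ck$. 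Applying this \emph{separately} to $f(\l(\z_2+\z_1))$ and $f(\l(\z_2-\z_1))$ (after checking $\z_2\pm\z_1\ge0$, which uses $\a_k<0.2$) and \emph{then} averaging gives exactly $e^{ck(\z_2-1)}\cosh(ck\z_1)$, and $\ln\cosh x\le x^2/2$ finishes it. With $\z_1=(ck)^{-1/2}\a^{1/2}$ one has $(ck\z_1)^2/2=ck\a/2$ exactly, which is why everything lines up without any Taylor remainder analysis. Similarly, the paper simply uses $-\ab\ln\ab\le\a$ rather than an asymptotic expansion. So the missing idea is concavity of $\ln f$ (and the resulting \eqref{fratioexpbound}); without it your ``one-sided'' concern remains genuinely open.

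The second assertion and the $\z_2\ge1-\a_k>0$ check are fine — both your compactness phrasing and the paper's ``$\a\ln(\a/\a_k)$ is bounded away from $0$ on $[\d,0.99\a_k]$, and $c\ge2/k$'' deliver the same conclusion once the first part is in place. One small note: the paper also explicitly keeps track of $c^{k/(k-2)}\le1$ inside the final logarithm (``pessimistically taking $c=1$'') to pass from the exact expression to $(c\a)(\tfrac k2-1)\ln(\a/\a_k)$; you should make that step explicit as well.
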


Note that the first part of the claim establishes \eqref{lemma1.2},
and the second part, with $\d=\a_k/3$, establishes \eqref{lemma1.1} 
for $\a \in [\a_k/3,0.99 \a_k]$.

\begin{proof}
Trivially,
$\z_2 = \ab \geq 1-\alpha_k>0$,
since $\a_k = e k^{-k/(k-2)} < e/k < 1$.
The issue in this range of $\a$ is 
to control the final logarithmic term of $\HH$
when the two summands within the logarithm are nearly equal.
Note that $\ln f(x)$
is concave on either side of 0
(diverging to $-\infty$ at $0$, it is not concave as a whole),
as
$$
\bigl[\ln f(x)\bigr]''=\frac{e^x(1-x-e^{-x})}{f^2(x)}<0.
$$
Since $\frac d{d\D} \ln f(\l (1+\D)) = \frac{\l f'(\l)}{f(\l)}$,
if $\l$ and $\l(1+\D)$ are on the same side of 0 (i.e., if $1+\D \geq 0$)
then concavity gives
$ \ln f(\l (1+\D))
  \leq \ln f(\l) + \D \frac{\l f'(\l)}{f(\l)} $.
Or, with $\z=1+\D$, if $\z \geq 0$ then
\begin{align}
\frac{f(\l \z)}{f(\l)}
  \leq \exp\left((\z-1) \frac{\l f'(\l)}{f(\l)}\right)
  = \exp\left((\z-1) ck \right) ,
  \label{fratioexpbound}
\end{align}
recalling from \eqref{lambda} 
that $\l f'(\l)/f(\l) = ck$.
It is easily checked that \eqref{ak} gives $\a_k < 0.2$,
hence from \eqref{zsmall}
$\z_2>0.8$ and $\z_1<0.4$, so
$\z_2-\z_1 \geq 0$ and of course
$\z_2+\z_1 \geq 0$.
Thus for the final term of $\HH$, from \eqref{fratioexpbound} we have
\begin{align*}
 \hspace*{0.5in}&\hspace*{-0.5in}
 \ln\frac{f(\l(\z_2+\z_1))+f(\l(\z_2-\z_1))}{2f(\la)}
  \\ &\leq
  \ln \left(
    \frac{ \exp(ck(\z_2+\z_1-1))} 2 +
    \frac{ \exp(ck(\z_2-\z_1-1))} 2
     \right)
  \\&=
  \ln \left(
    \exp(ck(\z_2-1)) \left[
    \frac{ \exp(ck \z_1) + \exp(-ck \z_1)}{2}
     \right] \right)
  \\&=
  ck(\z_2-1) + \ln \cosh (ck \z_1)
  \\& \leq
  ck(\z_2-1) + (ck \z_1)^2/2,
\end{align*}
using the well known inequality 
$\cosh x \leq e^{x^2/2}$ 
Now also using $-\ab \ln \ab \leq \a$ for all $\ab \in [0,1)$,
substituting $\zzz$
from \eqref{zsmall} into $\HH$,
\begin{align*}
 \HH
  & \leq -c \a \ln \a + c \a + c k \a \ln ((c k \a)^{1/2}) +0
    +ck (-\a) + \sqrt{ck\a}^2/2
  \\& = 
  c \a [(\tfrac k 2-1) \ln \a + (1-\tfrac{k}2) + \tfrac k 2 \ln (c k) ]
  \\& =
  (c \a)(\tfrac k 2-1) \ln [ \a \: \tfrac1e (ck)^{k/(k-2)} ] .
\end{align*}
Pessimistically taking $c=1$ within the logarithm
and recalling $\a_k$ from \eqref{ak},
\begin{align}
\HH
  & \leq
  (c \a)(\tfrac k 2-1) \ln ( \a / \a_k) .
   \label{small1}
\end{align}
(A different upper bound for $c$ would simply call for a different
value for $\a_k$.)
This proves the first part of the claim.

Clearly, for all $\a \in (0,\a_k)$, $\a \ln(\a/\a_k)$ 
is negative,
so for any $\d=\d(k)>0$, over 
$\a \in [\d,0.99 \a_k]$ 
it is bounded away from 0.
By hypothesis, $c \geq 2/k$ (any positive constant would do),
thus $\HH$ is also bounded away from 0, i.e., there is some $\e=\e(k)>0$
for which $\HH \leq -\e$.
This proves the second part of the claim.
\end{proof}

\begin{claim} \label{amed}
For all $k \geq 4$ and all $c \in (2/k, 1]$,
there exist $\e,\zn>0$ such that
for all $\a \in (\aak, \medup]$,
taking 
$\z_1=\a$, $\z_2=\ab$
yields
$\HH < -\e$ and $\z_2 \geq \zn$.
\end{claim}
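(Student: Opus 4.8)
The claim asks us to bound $\HH$ away from $0$ on a compact $\a$-range $(\aak,\medup]$ bounded away from the two danger points $\a=0$ and $\a=1/2$, using the simple choice $\zzz=(\a,\ab)$. With this choice the entropy-plus-logarithm terms collapse considerably: since $ck\a\ln(\a/\z_1)=ck\a\ln 1=0$ and likewise $ck\ab\ln(\ab/\z_2)=0$, the formula \eqref{H=} reduces to
\begin{equation*}
\HH = cH(\a) + \ln\frac{f(\l(\ab+\a))+f(\l(\ab-\a))}{2f(\la)}
    = cH(\a) + \ln\frac{f(\l)+f(\l(1-2\a))}{2f(\la)} .
\end{equation*}
So the whole claim is an inequality in the single variable $\a$ (and the parameters $c,k$), with $\l=\l(ck)$ determined by $\a$-independent data. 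The plan is: (i) record this simplification; (ii) show that as a function of $c\in(2/k,1]$ the right-hand side is maximized — or at least suitably controlled — at $c=1$, so it suffices to treat $c=1$; (iii) on the compact interval $\a\in[\aak,\medup]$, bound the $f$-ratio term above by a quantity strictly less than $-cH(\a)$, using monotonicity/convexity of $f$ and of $\ln f$ established in Remark~\ref{psi''} and in the proof of Claim~\ref{asmall}.

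**Key steps in order.** First I would substitute $\zzz=(\a,\ab)$ and derive the displayed reduction, noting that $\ab-\a=1-2\a\ge 0$ since $\a\le\medup<1/2$, so both arguments of $f$ inside the logarithm are nonnegative and we are on the ``nice'' side of $0$. Second, I would pin down the $c$-dependence: $\l=\l(ck)$ is increasing in $c$ by Remark~\ref{psi''}, and I would argue that $\HH$ as a function of $c$ (through both the explicit factor and through $\l$) is nondecreasing, so that $\sup_{c\le 1}\HH = H_k(\a,\zzz;1)$; then fix $c=1$ and write $\l_k=\l(k)$. Third, for the $f$-ratio term I would apply the concavity bound \eqref{fratioexpbound} from the proof of Claim~\ref{asmall} (valid since $1-2\a\ge0$) to get
\begin{equation*}
\ln\frac{f(\l_k)+f(\l_k(1-2\a))}{2f(\l_k)}
 \le \ln\frac{f(\l_k)+f(\l_k)e^{-2\a k}}{2f(\l_k)}
 = \ln\frac{1+e^{-2k\a}}{2},
\end{equation*}
using $\l_kf'(\l_k)/f(\l_k)=k$ at $c=1$. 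So it remains to check the purely numerical inequality $H(\a)+\ln\frac{1+e^{-2k\a}}{2}<-\e$ for $\a\in[\aak,\medup]$ and all $k\ge4$; since $\aak=0.99\,e\,k^{-k/(k-2)}$ is itself shrinking in $k$, one has to be a little careful, but the term $\ln\frac{1+e^{-2k\a}}{2}$ is at most $\ln\frac{1+e^{-2k\aak}}{2}$, and for $k\ge4$ the product $k\aak$ is bounded below by a positive constant, so this is $\le -c_0<0$ uniformly, while $H(\a)\le H(\medup)<\ln2$; choosing $\e$ to be a small explicit constant below $c_0-H(\medup)$ (checking this gap is positive for $k=4$ and only improves for larger $k$) finishes it. Finally, $\z_2=\ab\ge 1-\medup=:\zn>0$ is immediate.

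**Main obstacle.** The delicate point is the uniformity in $k$ at the \emph{left} endpoint $\a=\aak$: there the bound $\ln\frac{1+e^{-2k\a}}{2}$ is only as good as $k\aak$ is large, and $\aak\to 0$, so I must verify $\liminf_{k\ge4} k\aak>0$ (equivalently $\liminf k^{1-k/(k-2)}=\liminf k^{-2/(k-2)}>0$, which holds since $k^{-2/(k-2)}\to1$) and then ensure the resulting constant beats $\sup H(\a)$ over the range — this is where the hypothesis $k\ge4$ (rather than $k\ge3$, which fails by the remark after Lemma~\ref{Hlemma}) and the specific value $\medup$ get used. A secondary technical point is rigorously justifying step (ii), monotonicity of $\HH$ in $c$; if that turns out fiddly, the fallback is to avoid it entirely by keeping $c$ general and bounding $\l=\l(ck)$ crudely from below using $c>2/k$, at the cost of a messier but still elementary estimate. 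I expect step (ii) plus the endpoint uniformity check to be the bulk of the work; everything else is routine given the earlier lemmas.
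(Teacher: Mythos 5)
Your reduction of $\HH$ to $cH(\a)+\ln\bigl(\tfrac12+\tfrac12\,f(\l(1-2\a))/f(\l)\bigr)$ and the subsequent application of \eqref{fratioexpbound} match the paper's opening moves, but the proposed numerical finish fails. You bound the two terms of $\fb_k(\a):=H(\a)+\ln\bigl(\tfrac12+\tfrac12 e^{-2k\a}\bigr)$ separately by their respective worst endpoints, claiming that $c_0:=\bigl|\ln\tfrac{1+e^{-2k\aak}}{2}\bigr|$ exceeds $H(\medup)$ for $k=4$ with the gap only improving for larger $k$. Numerically this is false: for $k=4$ one has $\aak\approx 0.1682$, so $c_0\approx\bigl|\ln 0.630\bigr|\approx 0.462$, while $H(\medup)\approx 0.588$, giving a gap of about $-0.13$. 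This is not a small slack that can be absorbed: the actual function $\fb_4$ is negative on $[\aak,\medup]$ but only barely so (the paper's footnote notes $\fb_4(\medup)\approx -1.5\times 10^{-5}$), so any argument that separately worst-cases the two terms and loses $\approx 0.13$ cannot work. The paper instead performs interval arithmetic: it covers $[\aak,\medup]$ with $73$ subintervals $[\a',\a'']$ and checks $H(\a'')+\ln\bigl(\tfrac12+\tfrac12 e^{-2k\a'}\bigr)<0$ on each, exploiting that $H$ is increasing while the $\log$ term decreases; for $k\ge 6$ it supplements this with an induction on $k$ that only needs to be checked on the shrinking interval $[1/k,1/(k-1)]$. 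You need a comparably tight, piecewise argument.

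A secondary gap is your step~(ii), the reduction to $c=1$ via claimed monotonicity of $\HH$ in $c$. This is not obviously true (and you yourself flag it): $cH(\a)$ increases in $c$ but the logarithmic term decreases, since $R(\l,x)=f(\l x)/(x^2 f(\l))$ is decreasing in $\l$ by the computation in Claim~\ref{alarge}, and $\l=\l(ck)$ increases in $c$. The paper does something cleaner: apply \eqref{fratioexpbound} first to get $\HH\le cH(\a)+\ln\bigl(\tfrac12+\tfrac12 e^{-2ck\a}\bigr)$, observe that $c\mapsto\ln\bigl(\tfrac12+\tfrac12 e^{-2ck\a}\bigr)$ is convex with value $0$ at $c=0$, so for $c\in[0,1]$ it is at most $c\,\ln\bigl(\tfrac12+\tfrac12 e^{-2k\a}\bigr)$, and conclude $\HH\le c\,\fb_k(\a)$; the hypothesis $c\ge 2/k$ then gives the uniform negative bound once $\fb_k<0$ is established. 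You should adopt this convexity step rather than trying to prove a dubious monotonicity.
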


\begin{proof}
Taking $\zn=1-\medup$, $\z_2 = \ab \geq \zn >0$ is immediate.
Let us confirm, though, that $\a_k<\medup$ so that the 
interval in the hypothesis is sensible.
For this, $\a_k=e k^{-k/(k-2)} < e/k$ 
suffices for $k \geq 10$,
and the cases $k \in \set{4,\ldots,9}$ are easily checked.

With $\z_1=\a, \z_2=\ab$, $\HH$ simplifies to 
\begin{align}
 \HH
  &=
 cH(\a) + \ln \parens{\frac12 + \frac12 \frac {f(\l(1-2\a))} {f(\l)}}
 \label{Hsimple} . 
\end{align}
Applying \eqref{fratioexpbound} and $1-2\a>0$, it follows that
\begin{align}
 \HH
  & \leq
 cH(\a) + \ln \parens{\tfrac12 + \tfrac12 \exp(-2ck\a)}
  \notag
  \\ 
 & \leq
 c[H(\a) + \ln \parens{\tfrac12 + \tfrac12 \exp(-2k\a)}] ,
  \label{Hmed}
\end{align}
the last step following from convexity of
$\ln(\tfrac12 + \tfrac12 \exp(-2ck\a)) =: \fa(c)$
as a function of $c$.
The application of convexity is simply
$\fa(c) \leq (1-c) \fa(0)+c \, \fa(1) = c \, \fa(1)$,
since $\fa(0)=0$.
The proof of convexity is that, with $L=2k\a$,
$ \textfrac{d \fa}{dc} = - \textfrac{2L}{(\exp(cL)+1)} $
is clearly increasing with $c$.

From \eqref{Hmed} and $c \geq 2/k$, 
it suffices to prove that
\begin{align*}
\fb_k(\a)
 & :=
 H(\a) + \ln(\tfrac12 + \tfrac12 \exp(-2k\a))
\end{align*}
is negative for $\a \in [\aak,\medup]$.
(For $k=3$ this fails to hold,
not just for the approximation $\fb_k(a)$, but also for
the true value of $\HH$ with this choice of $\zzz$.)
For a fixed $k$
this can be confirmed by interval arithmetic.
Specifically, since $H(\a)$ is increasing and
$\ln(\tfrac12 + \tfrac12 \exp(-2k\a))$ is decreasing,
if $\a \in [\a',\a'']$ then
$\fb_k(\a) \leq 
 H(\a') + \ln(\tfrac12 + \tfrac12 \exp(-2k\a''))$.
Thus, it suffices to
cover the interval $[\aak,\medup]$
with subintervals $[\a',\a'']$
for each of which
$ H(\a'') + \ln(\tfrac12 + \tfrac12 \exp(-2k\a')) < 0$.

For $k=4$, 
$0.1681 < 0.99 \a_k$
and 73 intervals suffice to cover $[0.1681,\medup]$
and show that $\fb_4(\a) < -10^{-5}$.%
\footnote{%
We cannot get a significantly smaller bound since
$\fb_4(\medup) \approx -0.0000149$:
we chose $\a=\medup$ roughly as large as possible
in order to minimize the work left for Claim~\ref{alarge}.
However, the proof there can work with $\a$ as small as 0.2736
(or smaller, with minor modifications),
and using that instead of $\medup$ here
would allow us to cover with 59 subintervals
and obtain $\fb_4(\a) < -10^{-4}$.%
}

For $k=5$, $0.1840<\aak$, and the 2 intervals 
$[0.1840, 0.2291]$ and $[0.2291, 0.2743]$
suffice to show that $\fb_5(\a) < -0.005$.

For $k \geq 6$, we first claim that $\tfrac1k<0.99 \ak$.
Multiplying through by $k$, taking logarithms, then multiplying by $(k-2)/2$,
this is equivalent to
$0< \tfrac{k-2}2 \ln(0.99 e)- \ln(k)$.
This is true for $k=6$, and
true for all larger $k$
since the derivative of the right hand side is positive.
We now prove by induction on $k$ that
$\fb_k(\a)<0$ over $\a \in [\frac1k,\medup]$,
for $k \geq 6$.  
For the base case $k=6$,
the previous interval arithmetic approach, 
with the two intervals $[0.1666,0.2204]$, $[0.2204, \medup]$,
establishes that $\fb_6(\a) < -0.03$.
Since $\fb_k(\a)$ is monotone decreasing in $k$,
$\fb_k(\a) \leq \fb_{k-1}(\a) <0$ for $\a \in [\frac1{k-1},\medup]$,
by the inductive hypothesis,
so we need only show that
$\fb_k(\a) <0$ for $\a \in [\frac1k,\frac1{k-1}]$.
Over this interval,
$H(\a) \leq H(\frac1{k-1}) \leq H(\frac16) < 0.451$,
while the other term of $\fb_k(\a)$ 
is decreasing in $k\a$,
and $k\a \geq 1$, so
 $\ln(\tfrac12 + \tfrac12 \exp(-2k\a)) \leq
   \ln(\tfrac12 + \tfrac12 \exp(-2)) < -0.566$;
summing the two terms proves that $\fb_k(\a)<-0.1$.
\end{proof}

\begin{claim} \label{alarge}
For all $k \geq 4$ and all $c \in (2/k, 1)$,
there exist $\e = \e(c,k)>0$ and $\zn = \zn(c,k) >0$ such that
for all $\a \in (\medup, 1/2]$,
taking $\z_1=\a$, $\z_2=\ab$ yields
$\HH < -\e$ and $\z_2 \geq \zn$.
\end{claim}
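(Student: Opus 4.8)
Set $\zn:=1/2$; then $\z_2=\ab=1-\a\ge1/2=\zn$ throughout $(\medup,1/2]$, so the content of the claim is to produce $\e=\e(c,k)>0$ with $\HH\le-\e$ there. For $\z_1=\a,\z_2=\ab$ the two middle terms of \eqref{H=} vanish (their logarithms are $\ln(\a/\a)=\ln(\ab/\ab)=0$), while $\z_1+\z_2=1$ and $\z_2-\z_1=1-2\a$, so as in \eqref{Hsimple},
\[
 \HH=cH(\a)+\ln\Bigl(\tfrac12+\tfrac12\,r(\a)\Bigr),\qquad r(\a):=\frac{f(\l(1-2\a))}{f(\l)},\quad\l=\l(ck).
\]
For fixed $k\ge4$ and $c\in(2/k,1)$ the right-hand side is continuous in $\a$ on the compact interval $[\medup,1/2]$, and at $\a=1/2$ it equals $c\ln2+\ln\tfrac12=(c-1)\ln2<0$ because $f(0)=0$. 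Hence it suffices to prove $\HH<0$ for every $\a\in[\medup,1/2]$ and then take $\e:=-\max_{[\medup,1/2]}\HH$; this is the only place the hypothesis $c<1$ is used.

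The plan is to estimate the right-hand side by three elementary bounds. Write $\be:=1-2\a\in[0,1-2\medup]$ and $p_j:=\Pr{Z(\l)=j}=\l^j/(j!\,f(\l))$. First, the entropy bound $H(\a)\le\ln2-\tfrac12\be^2$, equivalent to $H(\tfrac12+t)\le\ln2-2t^2$, which holds since $\phi(t):=\ln2-2t^2-H(\tfrac12+t)$ satisfies $\phi(0)=\phi'(0)=0$ and $\phi''(t)=-4+(\tfrac14-t^2)^{-1}\ge0$. Second, by \eqref{pgf}, $r(\a)=\E{\be^{Z(\l)}}$; since $Z(\l)\ge2$ a.s.\ and $0\le\be\le1$, bounding $\be^{Z(\l)-2}$ by $1$, $\be$, $\be^2$ according as $Z(\l)$ is $2$, $3$, or $\ge4$ gives $r(\a)\le\be^2\bigl(p_2+\be p_3+\be^2(1-p_2-p_3)\bigr)$. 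Third, $\ln(1+x)\le x$. Combining,
\[
 \HH\le(c-1)\ln2+\be^2B(\be),\qquad B(\be):=p_2+\be p_3+\be^2(1-p_2-p_3)-\tfrac c2,
\]
and since $B'(\be)=p_3+2\be(1-p_2-p_3)\ge0$ (as $1-p_2-p_3=\Pr{Z(\l)\ge4}\ge0$), $B$ is nondecreasing, so $\be^2B(\be)\le(1-2\medup)^2\max\{B(1-2\medup),0\}$ on $[\medup,1/2]$. Thus $\HH<0$ throughout $[\medup,1/2]$ once $B(1-2\medup)<(1-c)\ln2/(1-2\medup)^2$. I would verify this last inequality for all $k\ge4$, $c\in(2/k,1)$ by a routine computation: $B(1-2\medup)$ depends on $c,k$ only through $p_2,p_3$, which depend only on $\l=\l(ck)$ (with $p_2$ and $p_2+p_3=\Pr{Z(\l)\le3}$ decreasing in $\l$, hence in $ck$); the crude bound $p_2+p_3\le1$ already disposes of all $c$ below an explicit threshold near $0.83$, and for larger $c$ one has $ck\ge0.83\,k\ge3.3$, so $\l\ge\l(3.3)$ and $\Pr{Z(\l)\le3}$ is bounded strictly below $1$, whereupon the inequality follows from explicit numerical bounds on $p_2(\l(ck))$ and $p_3(\l(ck))$ --- the tightest case being $k=4$ with $c\uparrow1$, where $\l\to\l(4)$ and the resulting margin is a positive constant (roughly $0.07$).

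The step I expect to be the genuine obstacle is exactly this final verification, near the corner $\a\to1/2$, $c\to1$: there $\HH\to0$, so the margin $\e(c,k)$ degenerates, and cruder estimates --- $H(\a)\le\ln2$ together with $r(\a)\le(1-2\a)^2$ (the latter from $f(x)/x^2$ being increasing, since $(f(x)/x^2)'=\tfrac{f(x)}{x^3}(\psi(x)-2)>0$ by Remark~\ref{psi''}) --- each waste a constant factor one cannot afford. What rescues the argument is retaining the true second-order behaviour of \emph{both} terms near $\a=1/2$, namely $H(\a)=\ln2-\tfrac12(1-2\a)^2+O((1-2\a)^4)$ and $r(\a)=p_2(1-2\a)^2+O((1-2\a)^3)$, so that the strictly negative term $(c-1)\ln2$ dominates the $O((1-2\a)^2)$ remainder; making this quantitative forces a lower bound on $\l=\l(ck)$, i.e.\ an upper bound on $p_2$, and hence the hypothesis $k\ge4$. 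Consistently with the remark preceding this section, the scheme fails at $k=3$, where $\l(3)\approx1.3$ is too small, $p_2$ too large, and $\HH>0$ at $\a=1/3$ for $c$ near $1$.
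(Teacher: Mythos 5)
Your proof is correct in outline but takes a genuinely different route from the paper's, and the final numerical verification is a sketch rather than a complete argument.

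Both you and the paper set $\zn=1/2$, reduce to the simplified form \eqref{Hsimple} $\HH = cH(\a)+\ln\bigl(\tfrac12+\tfrac12 f(\l x)/f(\l)\bigr)$ with $x=1-2\a$, and both recognize that the heart of the matter is the corner $\a\to\tfrac12$, $c\to1$. From there the tactics diverge. For the entropy term the paper uses $H(\tfrac12-\tfrac x2)\leq\ln\parens{\tfrac4{x^2+2}}$ (equation \eqref{entropybound}), which is \emph{weaker} than your $\ln2-\tfrac12x^2$, but is chosen precisely so that when the other term is bounded via $f(\l x)/f(\l)\leq\tfrac12 x^2$, the two logarithms telescope exactly to $(c-1)\ln\parens{\tfrac4{x^2+2}}$, so that the factor $(1-c)$ is exposed with no leftover dependence on $x$, $\l$, or $k$. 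For the $f$-ratio the paper works with $R(\l,x):=f(\l x)/(x^2 f(\l))$ and proves it is monotone increasing in $x$ and decreasing in $\l$ (via the convexity facts already established), so that corner evaluation $R(\l_0,x_0)<\tfrac12$ does the job; you instead use the probabilistic reading $f(\l x)/f(\l)=\E{x^{Z(\l)}}$ and a clean three-term truncation $\E{x^Z}\le x^2\bigl(p_2+xp_3+x^2(1-p_2-p_3)\bigr)$, which is an attractive and elementary alternative. Finally, the paper isolates the case split on $\l\gtrless\l_0$ (equivalently $c\gtrless\psi(\l_0)/k$), so each branch collapses to a one-line inequality; you instead get a single formula $\HH\leq(c-1)\ln2+x^2 B(x)$ with $B$ a quadratic in $x$, and then need a verification over all $(c,k)$.

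That last verification is where your proof is incomplete. Your crude bound $p_2+p_3\le1$ handles $c\lesssim0.83$, and you correctly identify that the genuinely tight corner is $k=4$, $c\uparrow1$, computing a margin of about $0.07$ for $B(\twicelo)$ there. But for the intermediate range $c\in(0.83,1)$ you gesture at "explicit numerical bounds on $p_2(\l(ck))$ and $p_3(\l(ck))$" without supplying them. What is actually required is either (i) a proof that $(1-c)\ln2/\be_0^2 - B(\be_0)$ is monotone decreasing in $c$ on this range, so that checking $c\to1$ suffices --- which in turn needs the monotonicities $p_2'(\l)<0$ (requires $\psi(\l)>2$, always true) and $(p_2+p_3)'(\l)<0$ (requires $\psi(\l)>3$, i.e.\ $ck>3$, which holds here) --- or (ii) a short covering by subintervals in $c$ as the paper does in Claim~\ref{amed}. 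Either is doable but neither is yet written down; the paper's two-case split on $\l$ makes this step entirely mechanical (a single numerical check of the form in \eqref{entropybounduse}), which is the main thing its formulation buys.
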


\begin{proof}
Again, $\z_2 \geq \zn=1/2$ is immediate.
In this case, with $\a$ relatively close to $1/2$,
the key is to govern the term $f( (1-2\a)\l) / f(\l)$
in the expression \eqref{Hsimple} for $\HH$.
We make the substitution $x=1-2\a$.

Dealing first with the leading term of $\HH$,
we have 
\begin{align}
H(\a) = H \parens{\frac12-\frac x2} &\leq \ln \parens{\frac4{x^2+2}}
 \label{entropybound}
\end{align}
for $x \in (0,1]$.
This can be verified by checking that
$H(\tfrac12-\tfrac x2) - \ln \parens{\frac4{x^2+2}}$
and its first derivative are both 0 at $x=0$,
while the second derivative,
$-\frac {x^2(10-x^2)} { (x^2+2)^2 (1-x^2)}$,
is negative 
for $x \in (0,1)$.

Returning now to the term $f( (1-2\a)\l) / f(\l)$,
motivated by the small-$\l$ asymptotic equality $f(\l) \sim \frac12 \l^2$,
we will show that
\begin{align*}
 \rr(\l,x) := \frac{f(\l x)}{x^2 f(\l)} \leq q ,
\end{align*}
where we may choose $q=1$ for any $x \in (0,1]$ and $\l>0$,
and smaller values of $q$ for restricted ranges of $x$ and $\l$.

To establish this, we first show that
$\rr(\l,x)$
is weakly increasing with $x$.
\begin{align*}
 \frac d{dx} \ln(\rr(\l,x))
  &= \frac {\l f'(\l x)} {f(\l x)} - \frac2x
  = \frac1{x f(s)} \parens{s f'(s)-2 f(s)} ,
\end{align*}
where we define $s:=\l x$.
To show that the derivative is positive, it suffices to show that
\begin{align*} 
s f'(s)-2 f(s)
  = s(e^s-1)-2(e^s-s-1)
  = (s-2)e^s +s+2 
  \geq 0 .
\end{align*} 
Indeed, the final inequality holds for all $s>0$ because
the expression and its first derivative are both 0 at $s=0$,
while the second derivative, $s e^s$, 
is nonnegative for all $s \geq 0$.

We next show that $\rr(\l,x)$ is weakly decreasing with $\l$.
\begin{align*}
 \frac d{d\l} \ln(\rr(\l,x))
  &= - \frac {f'(\l)} {f(\l)}
     + \frac {x f'(\l x)} {f(\l x)} ,
\end{align*}
and we wish to show that this is $\leq 0$.
It is obviously 0 at $x=1$,
so it suffices to check that it is increasing with respect to $x$ for $x>0$.
Since the first term is constant and $\l$ is constant,
this is equivalent to
 ${\l x f'(\l x)} / {f(\l x)} = \psi(\l x)$
being increasing with respect to $\l x$,
which is true by Remark~\ref{psi''}.

\sloppypar
Since $R(\l,x)$ is increasing with $x$ and decreasing with $\l$,
for all $\l \geq \l_0$ and $x \in (0,x_0]$,
$R(\l,x) \leq R(\l_0,x_0)$,
and $f(\l x)/f(\l) \leq x^2 R(\l_0,x_0)$.
Since $f(0)=0$, the last formulation extends to $x=0$.
That is,
for all $\l \geq \l_0$ and $x \in [0,x_0]$,
$f(\l x)/f(\l) \leq x^2 R(\l_0,x_0)$.

Since the lemma concerns $\a \in (\medup,1/2]$
we are interested in
$x = 1-2\a \leq x_0 := 1-2 \cdot \medup = \twicelo$.
We now proceed with two cases.

The first case treats $\l \geq \l_0 = \llo$ and $x \in [0,x_0)$.
These range of $\l$ permits $c=1$, where $\a=1/2$
would give the (unacceptable) value $\HH=0$;
\emph{this is thus the crucial case in the analysis, 
relying on the strict inequality $c<1$.}
Here,
$
R(\l,x) 
 \leq R(\l_0,x_0)
 < 0.5
$.
Thus, from \eqref{Hsimple} and \eqref{entropybound},
\begin{align}
 \HH
   &=
   c H(\tfrac12-\tfrac x2)
    + \ln \parens{\tfrac12 + \tfrac12 \cdot \tfrac{f(\l x)}{f(\l)}}
   \notag \\ & \leq
   c \ln\parens{\tfrac4{x^2+2}}
    + \ln\parens{\tfrac12 + \tfrac12 \cdot 0.5 \, x^2}
    \label{keycase}
   \\ & =
   (c-1) \ln\parens{\tfrac4{x^2+2}} .
\notag \intertext{Since $c-1<0$, this is maximized when the logarithm is minimized,
and is thus}
   & \leq
   (c-1) \ln\parens{\tfrac4{x_0^2+2}} 
   \notag \\ & <
   0.59 (c-1) .
   \notag
\end{align}

The second case treats the remaining values of $\l$, namely
$0< \l < \l_0$, and $x \in [0,x_0)$.
Here, $R(\l,x) \leq \lim_{\l \to 0} R(\l,x_0) = 1$.
Recalling from \eqref{lambda} 
that $ck=\psi(\l)$, and 
from Remark~\ref{psi''} that $\psi$ is increasing,
$c=(ck)/k \leq \psi(\l_0)/k \leq \cklo/4$,
as $k \geq 4$.
Thus,
\begin{align}
 \HH
   &=
   c H(\tfrac12- \tfrac x2)
    + \ln \parens{\tfrac12 + \tfrac12 \cdot \tfrac{f(\l x)}{f(\l)}}
   \notag
   \\ &<
   \tfrac{\cklo}{4} \ln\parens{\tfrac{4}{x^2+2}}
    + \ln\parens{\tfrac12 + \tfrac12 \cdot 1 \cdot x^2} ,
    \label{entropybounduse}
   \\ &<
   -0.0011   \quad \text{ for $x \in [0,x_0)$.}
   \notag
\end{align}
The final statement can be checked by verifying that the previous
expression has nonnegative derivative for all $x \geq 0$ and, at $x=x_0$,
is $<-0.0011$.

The lemma follows, with $\e = \min\set{0.59(1-c), 0.0011}$.
\end{proof}

\begin{claim} \label{ahalf}
For all $k \geq 4$ and all $c \in (2/k, 1]$,
there exist $\e = \e(c,k)>0$ and $\zn = \zn(c,k) >0$ such that
for all $\a \in (1/2,1]$
there exists $\zzz$ for which 
$\HH < -\e$ and $\z_2 \geq \zn$.
\end{claim}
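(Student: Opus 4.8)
The plan is to reduce the range $\a\in(1/2,1]$ to the already-handled range $\a':=1-\a\in[0,1/2)$ by a reflection, treating $\a$ very close to $1$ by a short direct computation. The reflection exploits a near-symmetry of $\HH$ under $(\a,\z_1)\leftrightarrow(1-\a,\z_2)$: since $H(\a)=H(1-\a)$ and the two terms $ck\a\ln(\a/\z_1)$ and $ck\ab\ln(\ab/\z_2)$ merely interchange, swapping the two coordinates of $\zzz$ gives $H_k(\a,(\z_2,\z_1);c)=H_k(1-\a,(\z_1,\z_2);c)+\D$, where
\[
 \D=\ln\frac{f(\l(\z_1+\z_2))+f(\l(\z_1-\z_2))}{f(\l(\z_1+\z_2))+f(\l(\z_2-\z_1))} .
\]
Since $f(y)-f(-y)=2(\sinh y-y)$ has the same sign as $y$, the numerator is at most the denominator whenever $\z_1\le\z_2$, so $\D\le0$ and hence $H_k(\a,(\z_2,\z_1);c)\le H_k(1-\a,(\z_1,\z_2);c)$ in that case; this is the ``domination by the symmetric counterpart'' referred to above. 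The relevant point is that every choice $(\z_1,\z_2)=\zzz(c,k,\a')$ made in Claims~\ref{asmall}--\ref{alarge} for $\a'<1/2$ satisfies $\z_1\le\z_2$: there $\z_2=1-\a'$, while $\z_1$ equals $\a'$ or $\sqrt{\a'/(ck)}$, both smaller than $1-\a'$ once $\a'\le\aak<0.2$.

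Fix $\zn=\zn(c,k)\in(0,\aak)$, small enough for the estimates below (how small depends on $c,k$ through $\l=\l(ck)$), and split $(1/2,1]$ at $1-\zn$. For $\a\in(1/2,1-\zn]$ set $\a':=1-\a\in[\zn,1/2)$ and let $(\z_1',\z_2'):=\zzz(c,k,\a')$ be the choice supplied by the second part of Claim~\ref{asmall} (with $\d=\zn$) if $\a'\in[\zn,\aak]$, by Claim~\ref{amed} if $\a'\in(\aak,\medup]$, and by Claim~\ref{alarge} if $\a'\in(\medup,1/2)$; these three ranges cover $[\zn,1/2)$ since $\zn<\aak<\medup<1/2$. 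Each yields $H_k(\a',(\z_1',\z_2');c)\le-\e'$ for a common $\e'=\e'(c,k)>0$, and $\z_1'\ge\zn$ in all three cases provided $\zn\le 1/(ck)$ (for Claim~\ref{asmall}, $\z_1'=\sqrt{\a'/(ck)}\ge\sqrt{\zn/(ck)}$). Taking $\zzzargs:=(\z_2',\z_1')$, the domination inequality ($\z_1'\le\z_2'$) gives $\HH\le-\e'$, while $\z_2=\z_1'\ge\zn$.

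For $\a\in(1-\zn,1]$, take instead $\zzzargs:=(1,\zn)$. Put $\e_0=\e_0(c,k):=-\tfrac13\ln\tfrac{\cosh\l-1}{f(\l)}$, which is positive because $f(\l)-(\cosh\l-1)=\sinh\l-\l>0$. Using the identity $\tfrac12\bigl(f(\l(\z_1+\z_2))+f(\l(\z_2-\z_1))\bigr)=f(\l\z_2)+e^{\l\z_2}(\cosh(\l\z_1)-1)$ from the proof of Lemma~\ref{expbounds},
\[
 \HH=cH(\a)+ck\,\a\ln\a+ck\,\ab\ln\tfrac{\ab}{\zn}
      +\ln\frac{f(\l\zn)+e^{\l\zn}(\cosh\l-1)}{f(\l)} .
\]
The last term does not involve $\a$ and, by continuity in $\zn$, is at most $-2\e_0$ once $\zn$ is small. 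The remaining three terms vanish as $\a\to1$: for $\a\ge1-\zn$ one has $0\le cH(\a)\le H(\zn)$, $\abst{ck\,\a\ln\a}\le2ck\,\zn$ (for $\zn\le\tfrac12$), and $\abst{ck\,\ab\ln(\ab/\zn)}\le ck\,\zn/e$ (maximizing $t\ln(1/t)$ over $t=\ab/\zn\in[0,1]$), so their sum is at most $\e_0$ once $\zn$ is small enough; hence $\HH\le-\e_0$, and $\z_2=\zn\ge\zn$ trivially. Combining the two ranges with $\e:=\min\set{\e',\e_0}$ establishes the claim.

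I expect the range of $\a$ close to $1$ to be the one needing care. Reflection by itself is useless there, because $\HH$ genuinely equals $0$ at $\a'=0$ (the empty collection of rows is always a critical set), so the symmetric counterpart gives only $\HH\le0$ rather than $\HH\le-\e$. The gain comes from the fact that $\a=1$ forces \emph{all} $m$ rows into the critical set --- a far stronger parity constraint than the empty set --- which appears analytically as the strict inequality $\cosh\l-1<f(\l)$; the remaining work is the routine bookkeeping making the three vanishing terms above negligible against this fixed gap, which is why the constants are fixed in the order $\e_0$ (depending only on $c,k$), then $\zn$ small, then $\e$.
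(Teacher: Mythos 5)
Your proof is correct and takes essentially the same approach as the paper's: a reflection argument $(\a,\z_1)\leftrightarrow(1-\a,\z_2)$ using the inequality $f(y)\ge f(-y)$ to dominate $\HH$ by its value at $\ab$ for $\a$ away from $1$, plus a separate continuity argument near $\a=1$ driven by the strict inequality $\cosh\l-1<f(\l)$ (equivalently $f(\l)>f(-\l)$). The only cosmetic difference is that you use $\zzz=(1,\zn)$ with explicit quantitative bounds near $\a=1$, where the paper uses $\zzz=(1-\d,\d)$ and appeals to continuity more abstractly.
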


\begin{proof}
For any $x >0$, $f(x) >f(-x)$;
this follows from $f(x)-f(-x)=e^x-e^{-x}-2x=2(\sinh(x)-x) > 0$,
the last inequality well known.
Since $ck>2$, $\la=\la(ck) >0$, and \eqref{Hsimple} gives
$$
\lim_{\a \to 1} H_k(\a,(\a,\ab);c)=\ln\frac{f(\la)+f(-\la)}{2f(\la)}<0 .
$$
By continuity of $H_k(\a,(\z_1,\z_2);c)$ 
with respect to $\a$, $\z_1$ and $\z_2$,
there exist 
$\delta=\delta(c,k)>0$ and $\e=\e(c,k)>0$ for which 
\begin{equation} \label{alpha1}
\sup_{\alpha\in [1-\delta,1]}
H_k(\a,(1-\d,\d);c)
 \leq -\e .
\end{equation}
This establishes the claim for 
$\alpha\in [1-\delta,1]$.

For $\a \in (\frac12,1-\d)$,
let $\zzz=(\z_1,\z_2)$
be given by $\z_1(\a)=\z_2(\ab)$, the latter 
determined by Claims \ref{asmall}--\ref{alarge}, and likewise 
$\z_2(\a)=\z_1(\ab)$.
Then, 
\begin{align*}
H_k(\a,\zzz(\a);c) 
 &= H_k(\a,(\z_2(\ab),\z_1(\ab));c) 
 \\& \leq H_k(\ab,(\z_1(\ab),\z_2(\ab));c) 
 = H_k(\ab,\zzz(\ab);c) .
\end{align*}
The 
inequality follows from \eqref{H=}:
for the first three terms of its right hand side by symmetry, 
and for its last term by applying the inequality $f(x) \geq f(-x)$,
with $x:=\z_2(\ab)-\z_1(\ab) \geq 0$
(in the proofs of Claims \ref{asmall}--\ref{alarge}, $\z_2 \geq \z_1$).
It follows that
\begin{align*}
H_k(\a,\zzz(\a);c) 
 \leq -\e,
\end{align*}
where $\e$ is chosen as the minimum of corresponding values 
in Claims \ref{asmall}--\ref{alarge},
with the value of $\d$ chosen for \eqref{alpha1}
also serving as the $\d$ in Claim~\ref{asmall}.

Finally, for $\zn(c,k)>0$ suitably chosen, we have
$\z_2 \geq \zn(c,k)>0$.
This follows because 
for $\a \geq 1-\d$ we have $\z_2=\d$, while
for $\a \in (1/2,1-\d)$ we have 
$\z_2(\a)=\z_1(\ab)$,
which by Claims \ref{asmall}--\ref{alarge} is
variously of order $\Theta(\ab^{1/2})$ or $\Theta(\ab)$,
and in either case bounded away from 0 since $\ab \geq \d$.
\end{proof}

This completes the claims used in proving Lemma~\ref{Hlemma}.

\section{More precise threshold behavior} \label{sharper}
With relatively little additional work, 
we can prove the prove the finer-grained threshold behavior
given by Theorem~\ref{sharp}.

\begin{proof}[Proof of Theorem~\ref{sharp}]
By a standard and general argument we may assume that $m/n$ has a limit.
We reason contrapositively.
If there is a sequence of $m$ and $n$
for which the desired probability fails to approach 1 as claimed,
then it has a subsequence for which the probability approaches a 
value less than~1,
it in turn has a sub-subsequence for which $\lim m/n$ exists,
and by hypothesis it satisfies $2/k < \lim m/n \leq \infty$.
That is, if there is a counterexample, 
then there is one in which $m/n$ has a limit.
The case $\lim m/n \neq 1$ was already treated by Theorem~\ref{main},
so we assume henceforth that $\lim m/n=1$.

The unsatisfiable part is immediate from Remark~\ref{>1}.
For satisfiability, we begin with $k \geq 4$ and treat $k=3$ at the end.

\smallskip \noindent \textbf{Case} $\mathbf{k \geq 4.} \quad$
Claims \ref{asmall} and \ref{amed}
already treat $m/n=c$ in a closed interval including 1. 
So does Claim~\ref{ahalf}, in its treatment of $\a$ near 1
and the symmetry argument elsewhere,
contingent upon Claim \ref{alarge}.
Claim~\ref{alarge} also allows $c=1$ 
except in the case addressed by \eqref{keycase},
so we need only treat this case.

For $n$ sufficiently large we will have $c \in [0.99,1]$,
thus $\la = \psi^{-1}(c k) > 3.5$
and, for $x \leq x_0 = \twicelo$ as before,
$R(\l,x) \leq R(3.5,x_0) < 0.4$.
Using this to re-treat \eqref{keycase},
\begin{align}
\HH &\leq 
   -c \ln\parens{\tfrac{x^2+2}4}
   + \ln\parens{\tfrac{x^2+2}4}
   - \ln\parens{\tfrac{x^2+2}4}
    + \ln\parens{\tfrac12 + \tfrac12 \cdot 0.4 \, x^2}
 \notag \\& = (1-c) \ln\parens{\tfrac{x^2+2}4}
  + \ln\parens{ \frac{0.8x^2+2}{x^2+2}}
 \notag \\& \leq -0.59 (1-c) - \tfrac1{15}x^2 
  \quad \text{(for $x \leq x_0$)}.
  \label{criticalxxx}
\end{align}

\newcommand{\szzyy}{\sqrt{z_1^2-y^2}\,}
\newcommand{\abscosh}{| {\cosh z_1 e^{i\vartheta} -1} |}

We will also need bounds on $\alnu$ and $\ex[\Ymnl]$
better than those in
\eqref{abless} and \eqref{Ymnlless}.
Reasoning as for \eqref{blessbetter}, from \eqref{aseries} we have
\begin{align}
\alnu
 &=
 \frac{(k\ell)!}{2\pi}\!\!
 \oint\limits_{{\mbox{\Large${z=z_1e^{i\vartheta} \colon 
     \atop\vartheta\in (-\pi,\pi]}$}}\!\!}
 \frac {(\cosh z -1)^{\nu}} {z^{k\ell+1}} \, dz
  \leq
 \frac{(k\ell)!}{2\pi}\!\!
 \int_{-\pi}^\pi
 \frac {\abscosh^{\nu}} {z_1^{k\ell+1}} 
  \, d\vartheta 
 \notag \\&=
 O(1) \: {(k\ell)!}
 \frac {(\cosh z_1 -1)^{\nu}} {z_1^{k\ell+1}} \!\!
 \int_{0}^{\pi/2}
 \parens{ \frac {\abscosh} {\cosh z_1-1}
        }^{\nu} 
  \, d\vartheta .
 \label{a2}
\end{align}
Substituting $z_1 e^{i\vartheta} = x+\im y$, i.e.,
$y=z_1 \sin(\vartheta)$ and
$x=z_1 \cos (\vartheta) =\szzyy$,
\begin{align*}
\abscosh
  &= \cosh\big(\szzyy \big)-\cos(y) , \quad \text{and}
 \\
\frac{d}{dy}
 \abscosh
 &=
 - \, \tfrac{\sinh\parens{\szzyy}}{\szzyy} y + \sin(y)
 \leq
 -[1+\tfrac16 (z_1^2-y^2)] \, y +y,
\end{align*}
using 
$
 \sinh(x)/x 
 \geq 1+\tfrac16 x^2
$,
and
$\sin(y)\leq y$ for $y \geq 0$.
This gives
\begin{align*}
 \abscosh
  - (\cosh z_1 -1)
 &\leq
 -\tfrac16 \int_0^{z_1 \sin \vartheta} (z_1^2-y^2) \, y \, dy
 \\&=
 -\tfrac1{6} (\tfrac12 z_1^2- \tfrac14 y^2) y^2 \Big\lvert_{y=0}^{z_1 \sin \vartheta}
 \leq
 -\tfrac1{24} z_1^4 \sin^2(\vartheta) .
\end{align*}
Immediately,
\begin{align*}
 \ln \frac{\abscosh}{\cosh z_1 -1}
 &\leq
 \ln\parens{ 1 - \frac{\tfrac1{24} z_1^4 \sin^2(\vartheta)}{\cosh z_1-1} }
  \leq
  - \frac{\tfrac1{24} z_1^4 \sin^2(\vartheta)}{\cosh z_1-1} 
 = - \Theta(\vartheta^2) ,
\end{align*}
since $z_1=\Theta(1)$
and $0.63 \vartheta \leq \sin(\vartheta) \leq \vartheta$ 
for $\vartheta \in [0, \pi/2]$.
From \eqref{a2}, then,
\begin{align*}
\alnu
 &=
 O(1) \: {(k\ell)!}
 \frac {(\cosh z_1 -1)^{\nu}} {z_1^{k\ell+1}} \!\!
 \int_{0}^{\pi/2}
  \exp\parens{-\nu \: \Omega(\vartheta^2)}
  \, d\vartheta 
\\ &=
 O(1) \frac1{\sqrt \nu} \:  {(k\ell)!}
 \frac {(\cosh z_1 -1)^{\nu}} {z_1^{k\ell+1}} .
\end{align*}
Comparing with \eqref{abless}, note the leading $1/\sqrt \nu$.
(There is also a new factor $1/z_1$, 
but this is $\Theta(1)$.)

This immediately gives an analog to \eqref{peq}, namely 
\begin{equation*}
p(\ell,\nu)\leb \sqrt{\frac{\la}{\nu z_2}} \,
 \binom{km}{k\ell}^{-1} \frac{\la^{km}}{z_1^{k\ell+1}\,z_2^{k(m-\ell)}} \,\,
\frac{[e^{z_2}(\cosh z_1-1)]^{\nu}f(z_2)^{n-\nu}}{f(\la)^n} .
\end{equation*}
We would like to sum $\binom n \nu$ times this bound
as in \eqref{EYmn1}, but the leading $1/\sqrt \nu$ blocks application of the
binomial theorem.
However, a quick look at the ratio of consecutive terms,
$$
\sqrt\frac{\nu}{\nu+1} \;\; \frac{n-\nu}{\nu+1} \;\;
 \frac{e^{z_2}(\cosh z_1-1)} {f(z_2)} ,
$$
shows that the maximum occurs where the ratio is 1,
at some $\nu_0=\Theta(n)$,
and that terms before $\nu_0/2$ are exponentially smaller than the maximum.
Discarding these terms (which have negligible contribution to the sum)
in the remaining terms
we may replace the $1/\sqrt \nu$ by $1/\sqrt n$
(after which we may add back the early terms with the same substitution).
We then apply the binomial theorem to get an analog of
\eqref{Ymnlless} but smaller by $1/\sqrt n$.
The same simplifications as 
for \eqref{YlesseH} then yield
\begin{equation*}
\ex\bigl[\Ymnl\bigr]
  \leb \,
 {\frac{1}{\sqrt n}}\,
 \exp\bigl[n H_k(\alpha,\zzz;c)\bigr],
\quad \text{for any $\zzz>0$ with $\zzz = \Theta(1)$.}
\end{equation*}

Returning to \eqref{criticalxxx}, 
the contribution of values 
$x \in [0,\twicelo]$
(that is, $\ell \in [\medup n,n/2]$)
to the unsatisfiability probability is
\begin{align*}
\sum 
 \ex[\Ymnl]
 & \leq
\sum \Ortn \exp\parens{ n \HH}
 \\ & \leq
\Ortn \sum \exp\parens{ -0.59 n(1-c) - \tfrac1{15}x^2 n }
  \\ & = 
\Ortn \exp\parens{ 0.59 \: \omegan} \sum \exp\parens{ - \tfrac1{15}x^2 n}
 \\ &= 
\Ortn \exp(-0.59 \: \omegan) \; O(\sqrt n)
 \\ & 
 = \exp(-0.59 \omegan)
 \to 0, 
\end{align*}
since by hypothesis $\omegan \to \infty$.
Adding the contributions
to the unsatisfiability probability from other values of $\ell$,
notably $\ell=2$, the probability that the formula is unsatisfiable is
$O(m^{-(k-2)}+\exp(-0.59 \: \omegan))$.

\medskip

\textbf{Case} \noindent $\mathbf{k=3.} \quad$
We apply similar reasoning.
Since $\a_3 > 0.1$,
values $\a \in (0,0.099]$ are covered by Claim~\ref{asmall}.

For $\a \in [0.099, 0.4]$,
at multiples of $0.001$ we explicitly find values 
$\z_1,\z_2$ (again, multiples of $0.001$)
minimizing $\HH$;
for example $\a=0.300$ yields $\zzz=(0.360,0.667)$.
Using interval arithmetic, we verify that the same $\zzz$ value
yields negative values of $\HH$ for all values of $\a$ between the chosen one 
and the next (here, $\a \in [0.300,0.301]$),
simply by looking at the extreme values of the possible results
in each component calculation for $\HH$ (see \eqref{H=}).  
Since $c$ may be assumed to be arbitrarily close to 1,
allowing for some (sufficiently small) range of $c$ requires no 
further checking.
This proves that $\HH \leq -0.002$ on $\a \in [0.099, 0.400]$,
for all $c$ in a sufficiently narrow range about~1.

For $\a \in [0.4,\tfrac12]$ we follow the same approach as before.
This range equates to $x \in [0, 0.2]$.
Here, $\psi^{-1}(3) > 2.149$ gives
$R(\l,x) \leq R(2.149,0.2) \leq 0.495$,
which is good enough to yield an equivalent of \eqref{criticalxxx}
albeit with a constant smaller than $\tfrac1{15}$.

Values $\a >1/2$ are treated by Claim~\ref{ahalf} just as before.
\end{proof}

\newcommand{\gk}{g_k}
\newcommand{\gkx}{\gk(x)}
\newcommand{\chat}{\hat{c}}
\newcommand{\core}{H_2}
\newcommand{\mus}{{\mu^*}}

\section{\texorpdfstring%
{Satisfiability threshold for unconstrained $k$-XORSAT}%
{Satisfiability threshold for unconstrained k-XORSAT}}
\label{Unconstrained}
If a variable appears in at most one equation,
then deleting that variable, along with the corresponding equation if any,
yields a linear system that, clearly, 
is solvable if and only if the original system was.
Stop this process when each variable appears in at least two equations,
or when the system is empty.
Dubois and Mandler analyzed unconstrained $3$-XORSAT by analyzing this
process, which ends with a (possibly empty) constrained 3-XORSAT instance.

Regarding each variable as a vertex and each equation as a hyperedge
on its $k$ variables yields 
the $k$-uniform ``constraint hypergraph'' underlying a $k$-XORSAT instance.
The process described simply restricts the instance to the 2-core of its
hypergraph.
The analysis by Dubois and Mandler for 3-XORSAT is easily generalized
to $k$-XORSAT using the (later) analyses of the 2-core of
a random $k$-uniform hypergraph, 
and we take this approach.

It is well known that the 2-core of a uniformly random $k$-uniform hypergraph
is, conditioned on its size and order, uniformly random among all
such $k$-uniform hypergraphs with minimum degree~2.
(One short and simple proof is identical to that for
conditioning on the core's degree sequence in \cite[Claim~1]{Molloy}.)
Also, the ``core'' of a random $k$-XORSAT instance
is an instance uniformly random on its underlying hypergraph:
the (uniform) hypergraph core determines the core $A$ matrix,
while the core $b$ is simply the restriction of 
its uniformly random initial value to the surviving rows of $A$,
a process oblivious to $b$.

Thus, satisfiability of a random unconstrained instance hinges on the
edges-to-vertices ratio
of the core of its 
constraint hypergraph.

Recall the definition of $\l$ from \eqref{lambda}.

\begin{theorem} \label{unconstrained}
Let $Ax=b$ be a uniformly random unconstrained uniform random $k$-XORSAT
system with $m$ equations and $n$ variables.
Suppose that $k \geq 3$ and $m/n \to \infty$ with $\lim m/n=c$.
Define 
\begin{align*}
  \gkx := \frac{x}{k (1-e^{-x})^{k-1}} .
\end{align*}
With $\cstark = \gk(\l(k))$,
if $c<\cstark$ then $Ax=b$ is almost surely satisfiable,
and if $c>\cstark$ then $Ax=b$ is almost surely unsatisfiable.
\end{theorem}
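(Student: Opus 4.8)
The plan is to reduce satisfiability of the unconstrained instance to the edge-to-vertex ratio of the $2$-core of its constraint hypergraph, and then invoke Theorem~\ref{main} (or Theorem~\ref{sharp}). As noted in the excerpt, peeling degree-$\le 1$ vertices preserves satisfiability and terminates at the instance restricted to the $2$-core $H$ of the constraint hypergraph; moreover, conditioned on the order $n'$ and size $m'$ of $H$, this restricted instance is a uniformly random constrained $k$-XORSAT instance on $n'$ variables and $m'$ equations. Consequently, whenever the core is nonempty and linear (so $n',m'\to\infty$ with $\liminf m'/n'>2/k$), Theorem~\ref{main} shows the whole instance is satisfiable whp if $m'/n'$ stays bounded below $1$ and unsatisfiable whp if it stays bounded above $1$; and if the core is empty the instance is trivially satisfiable. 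So the task is to decide, as a function of $c$, on which side of $1$ the core's density lies.

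For this I would quote the standard description of the $2$-core of a random $k$-uniform hypergraph with $m=cn$ edges (Cooper~\cite{Cooper1}, Kim~\cite{Kim}, Molloy~\cite{Molloy}), obtained from the fixed point of the peeling recursion (a half-edge is ``dead'' with the probability $q$ solving $q=e^{-ck(1-q)^{k-1}}$). Let $\mus=\mus(c)$ be the largest positive root of $\mu = ck(1-e^{-\mu})^{k-1}$, equivalently of $c=\gk(\mu)$; such a root exists exactly when $c\ge \chat := \min_{x>0}\gk(x)$, and for $c>\chat$ the core has $\Theta(n)$ vertices and edges, with
\begin{equation*}
 n' = (1+o(1))\,n\,e^{-\mus}f(\mus), \qquad
 m' = (1+o(1))\,cn\,(1-e^{-\mus})^{k} \qquad\text{whp.}
\end{equation*}
Dividing, and using $\mus = ck(1-e^{-\mus})^{k-1}$ together with $f'(x)=e^{x}-1$, gives $k\,m'/n'\to \mus f'(\mus)/f(\mus)=\psi(\mus)$. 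Thus the parameter $\l$ of \eqref{lambda} attached to the core is precisely $\mus$, and the core's edge density tends to $\psi(\mus)/k$.

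It remains to locate the crossover. Since $\psi$ is strictly increasing with $\psi(0^+)=2$ (Remark~\ref{psi''}), $m'/n'<1 \iff \psi(\mus)<k \iff \mus<\psi^{-1}(k)=\l(k)$. To transfer this to $c$, I would analyze $\gk$: from $\tfrac{d}{d\mu}\ln\gk(\mu)=\tfrac1\mu-(k-1)\tfrac{e^{-\mu}}{1-e^{-\mu}}$ one sees $\gk$ strictly decreases then strictly increases, with unique minimizer $\mu_0$ characterized by $e^{\mu_0}-1=(k-1)\mu_0$; then $f(\mu_0)=(k-2)\mu_0$ and $\psi(\mu_0)=\tfrac{k-1}{k-2}\mu_0$, which one checks is $<k$ for every $k\ge3$ (equivalently the root of $e^\mu-1=(k-1)\mu$ lies below $\tfrac{k(k-2)}{k-1}$). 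Hence $\mu_0<\l(k)$, so both $\l(k)$ and the relevant root $\mus$ lie on the increasing branch of $\gk$, on which $\mus<\l(k)\iff c=\gk(\mus)<\gk(\l(k))=\cstark$; in particular $\chat=\gk(\mu_0)<\cstark$. Putting the cases together: for $c\le\chat$ the core is empty whp, so the instance is satisfiable; for $\chat<c<\cstark$ the core density tends to a limit $\psi(\mus)/k\in(2/k,1)$, so $n'-m'\to\infty$ and Theorem~\ref{main} gives satisfiability whp; for $c>\cstark$ the core density tends to a limit $\psi(\mus)/k>1$, so $m'-n'\to\infty$ and Theorem~\ref{main} gives unsatisfiability whp.

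The only genuinely substantive step is the calculus of the last paragraph --- showing $\gk$ is unimodal, pinning its minimizer, and verifying that $\l(k)$ sits past the minimizer --- which is what guarantees that we are tracking the physically relevant root $\mus$ and that $c<\cstark$ is equivalent to the core being subcritical. Everything else is bookkeeping to align the cited $2$-core counts with the normalization of \eqref{lambda} and of Theorems~\ref{main} and~\ref{sharp} (conditioning on $n',m'$, applying the theorems on the event that they lie in the relevant range), together with the standard fact that at and below $\chat$ the $2$-core is empty whp.
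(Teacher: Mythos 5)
Your argument is correct and follows the same overall strategy as the paper: peel to the $2$-core, quote Molloy's $2$-core size/order asymptotics (your parametrization via $m' = cn(1-e^{-\mus})^k$ is algebraically the same as the paper's via $\mu(e^\mu-1)/(ke^\mu)$, using the fixed-point equation), identify $km'/n' \to \psi(\mus)$, and then locate the threshold via calculus on $\gk$. The one place where the implementations differ is the key calculus step. The paper sets $\mus := \l(k)$ and directly computes $\gk'(\mus)$, reducing $\gk'(\mus)>0$ to the inequality $(e^{\mus}+\mus-1)(e^{\mus}-1) - \mus^2(e^{\mus}-1-\mus) > 0$, which it verifies by massaging into a sum of a square and a Taylor tail. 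You instead characterize the minimizer $\mu_0$ of $\gk$ by $e^{\mu_0}-1=(k-1)\mu_0$, use this to evaluate $f(\mu_0)=(k-2)\mu_0$ and hence $\psi(\mu_0)=\tfrac{k-1}{k-2}\mu_0$, and deduce $\mu_0<\l(k)$ from $\psi(\mu_0)<k=\psi(\l(k))$ via the strict monotonicity already established in Remark~\ref{psi''}. Your route is a bit cleaner in that it reuses existing machinery ($\psi$ monotone) rather than a fresh Taylor-series inequality, and it makes explicit the pleasant identity $\psi(\mu_0)=\tfrac{k-1}{k-2}\mu_0$; the cost is that you must separately argue unimodality of $\gk$ (your formula for $(\ln\gk)'$ does this once one notes $(e^\mu-1)/\mu$ is increasing) and verify $\mu_0<\tfrac{k(k-2)}{k-1}$, which you leave as ``one checks.'' Both are complete proofs of the same fact; the rest of your bookkeeping (conditioning on $(n',m')$, applying Theorem~\ref{main} on the relevant density ranges, the empty-core case) matches the paper.
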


\begin{proof}
We treat $k$ as fixed.
Restricting consideration to $x>0$,
from Molloy \cite[proof of Lemma 4]{Molloy},
$\gkx$ has a unique minimum $\chat$,
with $\gkx=c$ having no solutions for any $c<\chat$,
and two solutions for any $c>\chat$.
Simple calculus confirms
that for $k \geq 3$, $\gk$ is unimodal
(indeed, convex).

Let $H$ be a random $k$-uniform hypergraph with $m$ edges and $n$ vertices.
Molloy \cite[Theorem~1]{Molloy}
shows that if $\lim m/n<\chat$ then the 2-core is almost surely empty,
while if $\lim m/n=c>\chat$,
then with $\mu$ the larger solution of $\gk(\mu)=c$,
the order $\nn$ and size $\mm$ of the 2-core almost surely satisfy
\begin{align*}
\nn &= n \, \frac{e^\mu-1-\mu}{e^\mu} + o(n) ,
 & 
\mm &= n \, \frac {\mu (e^\mu-1)}{k e^\mu} + o(n) ;
\end{align*}
see also Achlioptas and Molloy \cite[Proposition 30]{AM}.
It follows for the core that, almost surely,
\begin{align}
\frac \mm \nn
 &= \frac {\mu (e^\mu-1)} {k (e^\mu-1-\mu)} + o(1) 
 = \frac1k \psi(\mu) + o(1) .
 \label{coremn}
\end{align}

Define $\mus=\lambda(k)$ so that $\psi(\mus)=k$;
remember from \eqref{lambda} that for $k>2$ this is well defined,
with $\mus>0$.
We claim that $\mus$ is the larger of the two values of $\mu$ for
which $\gk(\mu) = \gk(\mus)$.
Given that $\gk$ is unimodal, this is true iff $\gk'(\mus)>0$.
Now,
$$
\gk'(\mus) = \frac{1+e^{-\mus} (\mus-\mus k-1)}{k(1+e^{-\mus})^k}
.
$$
Focusing on the numerator, multiplying through by $e^\mus$,
and replacing $k=\psi(\mus)$,
this means showing that 
\begin{align*}
 e^\mus+\mus-1 - \mus
  \parens{ \frac{\mus(e^\mus-1-\mus)}{e^\mus-1} } 
 &>0 .
\end{align*}
Multiplying the expression by $e^\mus-1$ gives
\begin{multline*}
 (e^\mus+\mus-1) (e^\mus-1)
  - \mus^2 (e^\mus-1-\mus)
  \\ =
  (e^\mus-1-\mus-\tfrac12 \mus^2)^2 
   + 3 (e^\mus-1-\tfrac13 \mus-\tfrac1{12} \mus^3) > 0
\end{multline*}
as desired. The inequality is immediate from the 
Taylor series for $e^\mus$, as $\mus>0$.

Let $\cstark = \gk(\mus)$.
Because $\mus$ is the larger of the two values $\mu$ for which
$\gk(\mu)=\gk(\mus)$, we may apply \eqref{coremn},
concluding that a random $k$-uniform hypergraph with 
$\lim m/n = \cstark = \gk(\mus)$
has a core where, almost surely, $\mm/\nn = \frac1k \psi(\mus)+o(1)= 1+o(1)$.

For any $c >\cstark$, the larger solution $\mu$ of $\gk(\mu)=c$
has $\mu>\mus$ (by the unimodality of $\gk$),
and $\psi(\mu)>\psi(\mus)=k$ 
(by Remark~\ref{psi''}).
Thus, a random $k$-uniform hypergraph with 
$\lim m/n = c > \cstark$
has a core where, almost surely, $\mm/\nn = \frac1k \psi(\mu)+o(1) > 1$.
By this section's introductory remarks 
it follows that a random $k$-XORSAT instance with
$\lim m/n = c > \cstark$
reduces to a random constrained $k$-XORSAT instance 
with $\mm/\nn$ converging in probability to a value greater than $1$,
the reduced instance is almost surely unsatisfiable,
and thus so is the original instance.

By the same token,
if $c < \cstark$ then 
either $\gk(\mu)=c$ has no solution (if $c<\chat$), or
its larger solution 
has $\mu<\mus$ 
and $\psi(\mu)<\psi(\mus)=k$.
Thus, a random $k$-XORSAT instance with
$\lim m/n = c < \cstark$
reduces to a constrained $k$-XORSAT instance that 
either is almost surely empty (and trivially satisfied),
or has $\mm/\nn$ converging in probability to a value less then $1$,
and thus is almost surely satisfiable by Theorem~\ref{main}. 
Thus the original instance is almost surely satisfiable.
\end{proof}

\section*{Acknowledgments}
Our sincere thanks go to the anonymous referees for their 
very careful reading and many helpful critical comments.
We are very grateful to Paul Balister for a sketch of a proof that $\HH<0$
(the essence of Lemma~\ref{Hlemma})
using patchwork functional approximation
to get finitely away from the boundaries,
and interval arithmetic for the interior \cite{Balister};
the proof here is a different implementation of those ideas.
We are also grateful to Mike Molloy for helpful comments,
to Colin Cooper and Alan Frieze for pointing out related work,
and to Noga Alon for suggesting we aim for Theorem~\ref{sharp}.


\begin{thebibliography}{99}

\bibitem{AM}
D. Achlioptas and M. Molloy, 
The solution space geometry of random linear equations
\textit{Random Struct. Algorithms}, to appear, 
DOI 10.1002/rsa.20494.


\bibitem{ArFrPi}
J. Aronson, A. Frieze and B. Pittel, On maximum matching in sparse random graphs: Karp-Sipser revisited, 
\textit{Random Struct. Algorithms}, \textbf{12}(2) (1998), 111--177.

\bibitem{Balister}
P. Balister, Personal communication, Apr.\ 2012. 

\bibitem{BlKaWe}
J. Bl\"omer, R. Karp and E. Welzl,
The rank of sparse random matrices over finite fields,
\textit{Random Struct. Algorithms}, \textbf{10}(4) (1997), 407--419.

\bibitem{BBCKW}
B. Bollob\'{a}s, C.Borgs, J. T. Chayes, Jeong-Han Kim, and D. B. Wilson, 
The scaling window of the $2$-SAT transition,
\textit{Random Struct. Algorithms}, \textbf{18}(3) (2001) 201--256.

\bibitem{ChRe}
V. Chv\'{a}tal and B. Reed, Mick gets some (the odds are on his
 side), \textit{33th Annual Symposium on Foundations of Computer Science (Pittsburgh,
 PA, 1992), IEEE Comput. Soc. Press, Los Alamitos, CA} (1992) 620--627.

\bibitem{Cooper2}
C. Cooper,
On the rank of random matrices,
\textit{Random Struct. Algorithms}
\textbf{16}(2) (2000) 209--232.

\bibitem{Cooper1}
C. Cooper,
The cores of random hypergraphs with a given degree sequence,
\textit{Random Struct. Algorithms}
\textbf{25}(4) (2004) 353--375.

\bibitem{CGHS}
D. Coppersmith, D. Gamarnik, M. T. Hajiaghayi and G. B. Sorkin, 
Random MAX SAT, random MAX CUT, and their phase transitions,
\textit{Random Struct. Algorithms}
\textbf{24}(4) (2004) 502--545.

\bibitem{CrDa}
N. Creignon and H. Daud\'e, Smooth and sharp thresholds for random
$k$-XOR-CNF satisfiability, \textit{Theor. Inform. Appl.}\textbf{37} (2003) 127--147.

\bibitem{DPWZ}
R.W.R. Darling, M.D. Penrose, A.R. Wade and S.L. Zabell,
Rank deficiency in sparse random GF[2] matrices,
\textit{arXiv:1211.5455v1} (2012).

\bibitem{DaRa}
H. Daud\'e and V. Ravelomanana, Random $2$-XORSAT at the satisfiability
threshold, \textit{LATIN 2008: Theoretical Informatics, 8th Latin AMerican Symposium
Proceedings} (2008) 12--23.

\bibitem{DGMMPR10}
M. Dietzfelbinger, A. Goerdt, M. Mitzenmacher, A. Montanari, R. Pagh, and M. Rink,
Tight thresholds for cuckoo hashing via XORSAT,
\textit{Proceedings of the 37th International Colloquium on Automata, Languages and Programming} (ICALP'10),
S. Abramsky, C. Gavoille, C. Kirchner, F.M. Auf Der Heide, and P.G. Spirakis (Eds.)
(Springer-Verlag, Berlin, Heidelberg) (2010) 213--225.

\bibitem{DGMMPR09}
M. Dietzfelbinger, A. Goerdt, M. Mitzenmacher, A. Montanari, R. Pagh and M. Rink,
Tight thresholds for cuckoo hashing via XORSAT, 
\textit{arXiv:0912.0287v3} (2010).

\bibitem{DMfocs}
O. Dubois and J. Mandler, The $3$-XORSAT threshold,\textit{
 Proceedings of the 43rd Annual {IEEE} Symposium on Foundations of Computer
 Science, FOCS 2002 (Vancouver, BC, Canada), IEEE Computer Society} (2002)
 769--778.

\bibitem{DM02}
O. Dubois and J. Mandler, The $3$-XORSAT threshold,\textit{
 C. R. Acad. Sci. Paris, Ser. I}
 \textbf{335} (2002) 963--966.

\bibitem{FdlV}
W. Fernandez de~la Vega, On random $2$-SAT, manuscript (1992).

\bibitem{Friedgut}
E. Friedgut, Necessary and sufficient conditions for sharp thresholds
 of graph properties, and the $k$-SAT problem, \textit{J. Amer. Math. Soc.}
 \textbf{12} (1999), 1017--1054.

\bibitem{Goerdt}
A. Goerdt, A threshold for unsatisfiability, \textit{J. Comput. System Sci.}
 \textbf{53} (1996) 469--486.

\bibitem{Kim}
J.H. Kim,
Poisson Cloning Model for Random Graphs
\textit{arXiv:0805.4133} (2008).

\bibitem{Kolchin}
V.F. Kolchin, Random graphs, \textit{Encyclopedia of Mathematics and its
 Applications, vol.~53, Cambridge University Press, Cambridge} (1999).

\bibitem{Molloy}
M. Molloy, Cores in random hypergraphs and Boolean formulas,
 \textit{Random Struct. Algorithms} \textbf{27}(1) (2005) 124--135.

\bibitem{Pittel86}
B. Pittel, Paths in a Random Digital Tree: Limiting Distributions,
\textit{Adv. Appl. Prob.} \textbf{18} (1986) 139--155.

\bibitem{PiSpWo}
B. Pittel, J. Spencer, and N. Wormald, 
Sudden emergence of a giant $k$-core in a random graph,
\textit{J. Combin. Theory Ser B} \textbf{67} (1996), 111--151.

\bibitem{PiYe}
B. Pittel and J.-A Yeum, How frequently is a system of $2$-linear equations
solvable? \textit{Electronic J. Combin.} \textbf{17} (2010) \# R 92.

\end{thebibliography}
\end{document}